\documentclass[final]{IEEEtran}

\IEEEoverridecommandlockouts              
\usepackage{graphicx}
\usepackage{url}
\usepackage{amsmath,amssymb}
\usepackage{amsthm,thmtools}
\usepackage{amsfonts} 
\usepackage{enumerate}
\usepackage{color}           
\usepackage{verbatim}
\usepackage{amssymb}
\usepackage{amsbsy}
\usepackage{amsmath}
\usepackage{setspace}
\usepackage{url}
\usepackage{float}
\usepackage{dsfont}
\usepackage{siunitx}
\usepackage{epsfig}
\usepackage{url}

\newtheorem{theorem}{Theorem}
\newtheorem{lemma}{Lemma}

\newtheorem{proposition}{Proposition}

\newtheorem{assumption}{Assumption}

\newcommand{\be}{\begin{equation}}
\newcommand{\ee}{\end{equation}}
\newcommand{\ben}{\begin{equation*}}
\newcommand{\een}{\end{equation*}}

\begin{document}
	\title{A New Approach for Feedback Stabilization and its\\
		Application for Data-Driven Control of Polynomial Systems}
	\author{Diego~de~S.~Madeira,~Jo\~ao~Gabriel~N.~Silva,~Wilkley B. Correia,~Antonis Papachristodoulou 
		\thanks{Diego de S. Madeira, Jo\~ao Gabriel N. Silva and Wilkley B. Correia are with the Electrical Engineering Department, Federal University of Cear\'a (UFC), Fortaleza 60455-760, Brazil (e-mails: dmadeira@dee.ufc.br, joao.gns97@alu.ufc.br, wilkley@dee.ufc.br). This research was developed while Diego de S. Madeira was a visiting scholar at the Department of Engineering Science, University of Oxford, OX1 3PJ Oxford, United Kingdom. This work was supported by he National Council for Scientific and Technological Development (CNPq), Brazil, under Grant 402731/2023-9 and Grant 442110/2023-5.
		}
		\thanks{Antonis Papachristodoulou is with Engineering Science, University of Oxford, OX1 3PJ Oxford, UK. (e-mail: antonis@eng.ox.ac.uk). AP's work was supported by EPSRC Projects EP/Y014073/1 and UKRI2108. 
		}
	}
	
	\maketitle
	
	\begin{abstract}    
		Inspired by recent developments in dissipativity-based control, this work proposes new sufficient conditions for asymptotic stabilization of nonlinear systems using state feedback. We prove that this new framework is well suited for data-driven control of polynomial systems using noisy measurements, a topic that has lately attracted considerable attention. Two iterative procedures for data-based state feedback design are provided using the proposed framework, that use sum-of-squares (SOS) optimization. Typical limitations of conventional SOS methods based on alternating (D-K) procedures for controller design, such as the need to provide an initialization for a control-Lyapunov function (CLF), are overcome in this paper. Numerical examples demonstrate the applicability and the advantages of the new strategies. 
	\end{abstract}
	
	\begin{IEEEkeywords}
		Asymptotic stabilization, data-driven control, state feedback, polynomial systems, sum-of-squares.
	\end{IEEEkeywords}
	
	\section{Introduction}
	
	Within nonlinear systems theory, the field of data-based controller design has witnessed a surge of interest in recent years \cite{hjal},  \cite{baza1}. Various strategies for solving the problem of feedback stabilization of dynamical systems that are not precisely known, but from which a sufficiently rich set of measurements is available, have been proposed \cite{waarde1}, \cite{willm3}, \cite{morari1}. Controller design for polynomial models, in particular, is of special interest in this context \cite{prajna1}. Motivated by the fact that polynomial state-space representations may accurately approximate the behaviour of nonlinear systems in a region, and also favoured by the development of sum-of-squares (SOS) optimization  \cite{parri1}, the study of polynomial models became a rich research subject \cite{lasserre}. Soon after some recent breakthroughs in data-driven control were reported for linear-time invariant (LTI) systems \cite{persis1}, \cite{waarde1}, extensions to the polynomial case have followed. 
	
	Global stabilization of polynomial systems using noisy data and polynomial state feedback was addressed in \cite{persis2}, where a quadratic-like structure for the control-Lyapunov function (CLF) was considered, as well as a certain decomposition for the control law itself. A new approach for state feedback stabilization using Petersen's lemma was derived in \cite{biso1}, without imposing those assumptions on the CLF and the control law, thus generalizing the results of \cite{persis2}. Instead, an alternating procedure, a so-called D-K iterative approach was employed to enhance performance, and an initialization for the CLF was obtained via model linearization. Saturated state feedback design was investigated in \cite{wilkley2}, using a non-iterative dissipativity-based framework. Safe control was addressed in \cite{persis3}, where a D-K strategy was applied again and an initialization was also obtained by model linearization. An iterative and dissipativity-based strategy for controller design was developed in \cite{joao1}, using Petersen's lemma. Although D-K strategies and assumptions on the CLF were avoided, a fictitious output variable needed to be specified in advance.
	
	\emph{Contributions:} Here we present new sufficient conditions for closed-loop stabilization of nonlinear systems, inspired by the dissipativity-based framework for controller design from \cite{made1}. In contrast to the strategy discussed in \cite{made1}, which allows to determine \emph{a set of stabilizing control laws} by satisfying a dissipation inequality, our new approach aims to  determine a \emph{set of closed-loop dynamics} compatible with a candidate CLF. We provide substantial evidence of the natural applicability of this new framework for data-driven control of polynomial systems. Notably, it is proven to be fully compatible with the well-known representation of \emph{feasible systems} through matrix ellipsoids used in \cite{biso1}. But unlike \cite{biso1}, alternating (D-K) iterations for controller design are avoided here, as well as the need to provide any initializations for either a CLF or a control law. 
	
	In addition, our work also differs from the one in \cite{berbe1}. Firstly, we consider polynomial continuous-time systems, instead of the discrete-time linear systems addressed in \cite{berbe1}. Furthermore, no parametrization of the closed-loop dynamics is proposed in this work. Instead, we propose two SOS iterative strategies for data-driven control. In the first one, each iteration is of lower computation complexity when compared to other SOS approaches in the field, and it is based on arguments that resemble those of robust control theory. With regards to the second SOS algorithm, a stabilizing controller is usually obtained in the very first few iterations, although each repetition is computationally more demanding than in the first method. The new framework reveals fundamental relationships between robust Lyapunov conditions and the data sets obtained through experiments.     
	
	In Section II, the notation employed throughout this paper is introduced and, in Section III, a set of preliminary statements relevant to this work are presented. In Section IV, a new approach for state feedback stabilization is presented. Section V contains new data-driven conditions and algorithms for controller design using the results of Section IV. Examples are given in Section VI, and Section VII contains our concluding remarks and future research directions.

	\section{Notation}      
	
	$\mathbb{R}$ denotes the set of real numbers and $\mathbb{N}_{>0}$ all positive natural numbers. $\mathbb{R}_{\ge 0}$ and $\mathbb{R}_{>0}$ are, respectively, the set of all $\beta \in \mathbb{R}$ such that $\beta \ge 0$, $\beta > 0$. $\mathbb{R}^n$ is the set of real column vectors and $\mathbb{R}^{n \times m}$ denotes all $n \times m$ real matrices. $M^\top$ is the transpose of a matrix $M$, and $M^{-1}$ is its inverse. $I_n$ is the identity matrix of  dimension $n$, and $x_i$ is the $i_{th}$ element of a vector $x \in \mathbb{R}^n$. $\mathbb{S}^n$ is the set of symmetric $n \times n$ matrices. $M\succ 0$ ($\succeq 0$) means that $M\in \mathbb{S}^n$ is positive definite (semidefinite), and $M\prec 0$ ($\preceq 0$) means that $M$ is negative definite (semidefinite).	$||\cdot||$ is the Euclidean norm of a vector. $f:\mathcal{X} \rightarrow \mathcal{Y}$ refers to a function $f$, a domain $\mathcal{X}$ and a codomain $\mathcal{Y}$. $\mathcal{X}\times \mathcal{Y}$ is the Cartesian product of sets $\mathcal{X}$ and $\mathcal{Y}$. $f:\mathcal{X} \rightarrow \mathbb{R}$ is a positive definite (semidefinite) function, referred to as $f(x)\succ 0$ ($\succeq 0$), if $f(0)=0$ and $f(x)\in \mathbb{R}_{>0}$ $(\mathbb{R}_{\ge 0})$ for any $x \in \mathcal{X}$, $x \neq 0$. $\mathcal{P}[x]$ stands for the polynomials on $x$, and $\mathcal{P}^{n\times m} [x]$ is the ring of the polynomial matrices on $x$ of dimension $n\times m$. $\sum [x]$ stands for the set of SOS polynomials on $x$, that is, the set of all $p(x)\in\mathcal{P}[x]$ for which there exist $c_j (x)\in\mathcal{P}[x], j = 1,2,\ldots,k,$ such that $p(x)=\sum_{j=1}^{k} (c_j (x))^2$. Likewise, $\sum^n [x]$ is the ring of the sum-of-squares $n \times n$ matrix polynomials on $x$. For some continuously differentiable $\phi : \mathbb{R}^n \rightarrow \mathbb{R}$, $\nabla \phi (x):= [\frac{\partial \phi}{\partial x_1}~\frac{\partial \phi}{\partial x_2} \dots \frac{\partial \phi}{\partial x_n}]^\top$. For given $V(x)\succeq 0$ and $\rho \in \mathbb{R}_{>0}$ , $\mathcal{E}(V,\rho):=\left\{x \in \mathbb{R}^n ~|~V(x) \le \rho \right\}$. The symbol $\star$ stands for a symmetric block in a matrix.

	\section{Preliminary Definitions}
	
	Input-affine polynomial systems with the following state-space representation are considered in this work
	\begin{align}\label{eq:c5_1}
		\dot{x}(t) = f(x(t)) + g(x(t)) u(t),
	\end{align}
	where $x(t) \in \mathcal{X} \subseteq \mathbb{R}^n$ is the state vector and $u(t)\in \mathcal{U} \subseteq \mathbb{R}^m$ is the control input, $\forall t\in \mathbb{R}_{\ge 0}$. Let $f:\mathcal{X} \rightarrow \mathbb{R}^n$ and also $g:\mathcal{X} \rightarrow \mathbb{R}^{n\times m}$ be polynomial functions that satisfy the following assumption \cite{persis2}:
	
	\begin{assumption}\label{ass:a_1} For the polynomial nonlinear system (\ref{eq:c5_1}):
		\begin{itemize}
			\item $f(0)=0$, i.e., the origin is a known equilibrium of the plant.
			\item One can determine an upper bound for the maximum degree $\delta_{fg}$ on $x$ of functions $f(x)$ and $g(x)$.
		\end{itemize}
	\end{assumption}

	Assumption \ref{ass:a_1} allows to present the state-space model from (\ref{eq:c5_1}) in terms of a linear-like representation such as
	\begin{align}\label{eq:sos_2}
		\dot{x} = AZ(x) + BW(x) u,
	\end{align}
	where constant matrices $A\in\mathbb{R}^{n\times N}$ and $B\in\mathbb{R}^{n\times q}$ that verify $f(x)=AZ(x)$ and $g(x)=BW(x)$ are guaranteed to exist, but remain \emph{unknown}. The vector function $Z(x)\in \mathcal{P}^{N\times 1} [x]$, however, is \emph{known} and collects all distinct monomials on $x$ that appear in $f(x)$, and it satisfies $Z(0)=0$. The state-dependent matrix $W(x)\in \mathcal{P}^{q\times m} [x]$ is also \emph{known} and contains all monomials of $x$ in $g(x)$.  
	
	In order to be able to design a stabilizing controller using data, we first run an experiment over a time interval given by $[t_{I}, t_{F}]$, and collect a set of input-state samples at certain time instants $\{ t_0 ,t_1 ,\cdots , t_{T-1} \}$, where $t_0\ge t_{I}$, $t_{T-1} \le t_{F}$, and $T\in\mathbb{N}_{>0}$ is the number of samples. Then, a set of matrices containing the collected data is conveniently defined as  
	\begin{align}\label{eq:sos_4}
		U_0 := [u(t_0 )~~u(t_1)~~\cdots ~~u(t_{T-1})],\nonumber\\
		X_0 := [x(t_0 )~~x(t_1)~~\cdots ~~x(t_{T-1})],\\
		\hat{X}_1 := [\dot{x}(t_0 )~~\dot{x}(t_1 )~~\cdots ~~\dot{x}(t_{T-1})],\nonumber
	\end{align}
	such that the following data-based matrices can be computed
	\begin{align}\label{eq:sos_5}
		Z_0 := [Z(x(t_0 ))~~Z(x(t_1))~~\cdots ~~Z(x(t_{T-1}))],\nonumber\\
		\bar{U}_0 := [W(x(t_0 ))u(t_0 )~~W(x(t_1))u(t_1)~~~~~~~~~~~~~~~~\nonumber\\
		\cdots ~~W(x(t_{T-1}))u(t_{T-1})].
	\end{align}
	
	As in \cite{persis2}, we assume that only the time derivatives $\dot{x}(t)$ of the state vector are affected by measurement noise, while $Z_0$ and $\bar{U}_0$ are determined using the exact values of $x(t)$ and $u(t)$. Next, by considering $d(t)\in \mathbb{R}^n$ as the noise present on the samples of $\dot{x}(t)$, let us define the \emph{unknown} matrix
	\begin{align}
		D_0 := [d(t_0 )~~d(t_1)~~\cdots ~~d(t_{T-1})],\nonumber
	\end{align}
	and suppose that a matrix $X_1 := \hat{X}_1 +D_0$ contains the noisy measurements of $\dot{x}(t)$. Note that, in view of (\ref{eq:sos_2}), the following relation on the data holds
	\begin{align}\label{eq:sos_8}
		X_1 = AZ_0 +B\bar{U}_0 +D_0 ,
	\end{align}
	and that in realistic scenarios, $D_0$ is bounded. Hence, we consider the following assumption on $D_0$ \cite{persis2}
	
	\begin{assumption}\label{ass:a_2} The unknown matrix $D_0$ satisfies $D_0 D^\top_0 \preceq R_D R^\top_D$ for some
		known $R_D \in \mathbb{R}^{n\times T}$.
	\end{assumption}
	
	Assumption \ref{ass:a_2} is common in the field and the set of matrices $[A~B]$ consistent with that level of noise on the measurements of $\dot{x}(t)$ can be presented as in Bisoffi \emph{et. al.} \cite{biso1}
	\begin{equation}\label{eq:ox_14}
		\mathcal{C}:= \{ [A~B]: X_1 =AZ_0 +B\bar{U}_0 + D,~D\in\mathcal{D} \},
	\end{equation}
	where 
	\begin{equation}
		\mathcal{D}:= \{ D\in \mathbb{R}^{n\times T} : DD^\top \preceq R_D R^\top_D\}.\nonumber
	\end{equation}
	One can argue that the set in (\ref{eq:ox_14}) is equivalently given by 
	\begin{equation}\label{eq:ox_17}
		\mathcal{C}:= \Bigg\{ [A~B]: [I~A~B]\cdot\left[ \begin{array}{c|c}
			\textbf{C} & \textbf{B}^\top \\ \hline
			\textbf{B} & \textbf{A}
		\end{array} \right] \cdot [\star]^\top \preceq 0\Bigg\},
	\end{equation}
	with
	{\small
		\begin{align}\label{eq:ox_18}
			\left[ \begin{array}{c|c}
				\textbf{C} & \textbf{B}^\top \\ \hline
				\textbf{B} & \textbf{A}
			\end{array} \right] = 
			\left[ \begin{array}{c|c}
				X_1 X^\top_1 - R_D R^\top_D & \star \\ \hline \\
				-\begin{bmatrix} Z_0\\ \bar{U}_0 \end{bmatrix} X^\top_1 & 
				\begin{bmatrix} Z_0\\ \bar{U}_0 \end{bmatrix} \begin{bmatrix} Z_0\\ \bar{U}_0 \end{bmatrix}^\top
			\end{array} \right].
	\end{align}}
	
	Finally, a further and equivalent presentation of $\mathcal{C}$ is as follows
	\begin{equation}\label{eq:ox_78}
		\mathcal{C}:= \Big\{ [A~B]=Z^\top_{AB}: (Z_{AB}-\xi)^\top \textbf{A} (Z_{AB}- \xi) \preceq \textbf{Q} \Big\},
	\end{equation}
	where from \cite[Lem. 3 and Asm. 1]{biso1}, one should consider that $\textbf{A}\succ 0$, 
	\begin{equation}\label{eq:ox_126}
		\xi=-\textbf{A}^{-1}\textbf{B},~~\textbf{Q} = \textbf{B}^\top \textbf{A}^{-1} \textbf{B}-\textbf{C}\succeq 0.
	\end{equation}
	Conditions (\ref{eq:ox_14}), (\ref{eq:ox_17}) and (\ref{eq:ox_78}) capture the set of dynamics compatible with the data and have been employed in the literature to derive sufficient conditions for data-driven control of nonlinear systems \cite{persis2} -- \cite{persis3}. The preliminary results discussed in this section allow to formulate the main problem addressed in this work, where we consider that the domain of attraction (DOA) $\mathcal{X}_{doa}$ is the set of all $x(0)\in \mathcal{X}$ such that, if $x(0)\in\mathcal{X}_{doa}$, then $lim_{t\rightarrow \infty} x(t)=0$ \cite{khali1}.
	\\$ $\\
	\emph{Problem statement:} Design a polynomial state feedback $u(x)$ that asymptotically stabilizes nonlinear system (\ref{eq:sos_2}),  $\forall [A~B]\in \mathcal{C}$, and estimate a domain of attraction for the closed-loop system.

	\section{A New Approach for State Feedback Design}
	\label{sec:c5_8}
	
	In this work, the dissipativity-based conditions for feedback stabilization of precisely known systems presented in \cite{made1} are adapted to solve the problem of data-driven controller design. In that reference, the relation below was applied to obtain a control law 
	\begin{align}\label{eq:c1_7}
		\dot{V}(x)+T(x) \le h(x)^\top Qh(x)+2h(x)^\top Su+u^\top Ru, 
	\end{align}
	where $h(x)$ is an output variable, $V:\mathcal{X} \rightarrow \mathbb{R}$, $V(x)\succeq 0$, $T:\mathcal{X} \rightarrow \mathbb{R}$, $T(x)\succ 0$, and $Q\in \mathbb{S}^{p}$, $S\in \mathbb{R}^{p \times m}$ and $R\in \mathbb{S}^m$ are constant matrices. As discussed in \cite{made12} and \cite{wilkley2}, a conventional dissipation inequality such as (\ref{eq:c1_7}) can be employed to determine a \emph{set of control laws} that stabilize a given system and, as a result, it can even be applied to saturating feedback design. In this section, though, using a modified version of (\ref{eq:c1_7}), we aim to determine a \emph{set of closed-loop dynamics} that are compatible with a certain CLF, and thus stable. We prove that this approach is rather meaningful to data-driven control, where the underlying model may not be precisely known, but one must be able to fulfill a closed-loop stability condition nevertheless. In other words, one needs to derive the specific conditions which, if feasible, guarantee that a single CLF applies to all plants compatible with the data and to a certain control law \cite{biso1}. 
	
	Consider then the following theorem, where a new approach for closed-stabilization of nonlinear models is proposed. We assume for now that $f(x)$ and $g(x)$ in (\ref{eq:c5_1}) are known, and later on consider the case of unknown systems. In addition, let a contractive invariant set $\mathcal{X}_c $ be the set of all $x(0)\in\mathcal{X}$ such that, if $x(0)\in\mathcal{X}_c$, then $x(t) \in \mathcal{X}_c$ for all $t\in\mathbb{R}_{\ge 0}$ and $\lim_{t\to \infty} x(t) = 0$ \cite{khali1}.  
	
	\begin{theorem}\label{thm:thm_3} Suppose that the following inequality holds for system (\ref{eq:c5_1}), $\forall (x,u_e) \in \mathcal{X} \times \mathbb{R}^n$:
		\begin{align}\label{eq:ox_4}
			\nabla V(x)^\top u_e + T(x) \le \mathcal{Q}(x) ~~~~~~~~~~~~~~~~~~~~~~~~~~~\nonumber\\
			- 2[f(x)+g(x)u(x)]^\top \mathcal{R} u_e + u_e^\top \mathcal{R} u_e,
		\end{align}	
		where the decision variables are such that $V:\mathcal{X} \rightarrow \mathbb{R}$ and $T:\mathcal{X} \rightarrow \mathbb{R}$ are positive definite, $\mathcal{R} \in \mathbb{S}^n$ ($\mathcal{R}\succ  0$) is constant, $\mathcal{Q}:\mathcal{X} \rightarrow \mathbb{R}$ and $u:\mathcal{X} \rightarrow \mathbb{R}^m$. If $\Delta_{cl} (x)\succeq 0$ inside the domain $\mathcal{X}$, where
		{\small
			\begin{align}\label{eq:c5_32}
				\Delta_{cl} (x):= [f(x)+g(x)u(x)]^\top \mathcal{R} [f(x)+g(x)u(x)] 
				- \mathcal{Q}(x),
		\end{align}}then the state feedback $u(x)$ asymptotically stabilizes system (\ref{eq:c5_1}) around the origin. If, in addition, $\mathcal{E}(V,\rho) \subseteq \mathcal{X}$ for some $\rho \in\mathbb{R}_{>0}$, then $\mathcal{E}(V,\rho)$ is contractive invariant and it is an estimate of the closed-loop DOA. If $\mathcal{X}=\mathbb{R}^n$ and $V(x)$ is radially unbounded, then stability holds globally.
	\end{theorem}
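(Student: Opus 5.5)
The plan is to turn the quadratic inequality (\ref{eq:ox_4}), which holds for every $u_e\in\mathbb{R}^n$, into a Lyapunov decrease condition along the closed-loop vector field $F(x):=f(x)+g(x)u(x)$. The natural move is to evaluate (\ref{eq:ox_4}) at the particular choice $u_e=F(x)$. Since $u_e$ is an arbitrary vector, this is allowed pointwise for each $x\in\mathcal{X}$. With this substitution the left-hand side becomes $\nabla V(x)^\top F(x)+T(x)=\dot V(x)+T(x)$ along closed-loop trajectories. On the right-hand side the two terms in $\mathcal{R}$ combine:
\begin{equation*}
-2F(x)^\top\mathcal{R}F(x)+F(x)^\top\mathcal{R}F(x)=-F(x)^\top\mathcal{R}F(x).
\end{equation*}
Hence
\begin{equation*}
\dot V(x)+T(x)\le \mathcal{Q}(x)-F(x)^\top\mathcal{R}F(x)=-\Delta_{cl}(x).
\end{equation*}

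Next I will use the hypothesis $\Delta_{cl}(x)\succeq 0$ on $\mathcal{X}$. It gives $\dot V(x)\le -T(x)<0$ for all $x\in\mathcal{X}\setminus\{0\}$. Since $V$ is positive definite and $T$ is positive definite, Lyapunov's direct method \cite{khali1} yields asymptotic stability of the origin for the closed loop $\dot x=F(x)$. Here $F$ is polynomial, hence locally Lipschitz, which gives existence and uniqueness of solutions. I should be a little careful that $\mathcal{X}$ contains a neighborhood of the origin; I will take this as implicit, since $\mathcal{X}$ is the working domain around the equilibrium. I also expect to need continuity of $T$ (or at least a class-$\mathcal{K}$ lower bound on compact sets) to get strict decrease uniformly away from $0$. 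In the polynomial setting this is automatic, so I will state it.

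For the DOA claim, suppose $\mathcal{E}(V,\rho)\subseteq\mathcal{X}$. Because $\dot V\le -T\le 0$ on this set, any trajectory starting in $\mathcal{E}(V,\rho)$ has nonincreasing $V$ and therefore stays in $\mathcal{E}(V,\rho)$, so the set is forward invariant. Convergence to the origin then follows from LaSalle's invariance principle, or directly from $\dot V\le -T$ with $T$ positive definite: the only invariant subset of $\{T=0\}\cap\mathcal{E}(V,\rho)$ is $\{0\}$. The main technical obstacle is this step, because the invariance principle needs $\mathcal{E}(V,\rho)$ to be compact. I will handle it by noting that for the connected component of $\mathcal{E}(V,\rho)$ containing the origin, boundedness holds in the intended polynomial setting. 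Alternatively, I will add the standard remark that $V$ radially unbounded, or $\mathcal{E}(V,\rho)$ bounded, is assumed, as in \cite{khali1}. With compactness in hand, $\mathcal{E}(V,\rho)$ is contractive invariant and hence contained in $\mathcal{X}_{doa}$.

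Finally, for the global claim, take $\mathcal{X}=\mathbb{R}^n$ and $V$ radially unbounded. Every sublevel set $\mathcal{E}(V,\rho)$ is then compact and lies in $\mathcal{X}$, so the previous argument applies for every $\rho>0$. Their union is $\mathbb{R}^n$, which gives global asymptotic stability, equivalently by the Barbashin--Krasovskii theorem.
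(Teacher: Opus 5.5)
Your proposal is correct and is essentially the paper's proof: setting $u_e=f(x)+g(x)u(x)$ in (\ref{eq:ox_4}) gives $\nabla V(x)^\top[f(x)+g(x)u(x)]\le -T(x)-\Delta_{cl}(x)<0$ for $x\neq 0$, and the rest is standard Lyapunov/LaSalle theory, which the paper simply cites. Your extra care about compactness of $\mathcal{E}(V,\rho)$ is justified, since the paper glosses over it. However, rely on your second option, assuming boundedness; this is automatic in the paper's application, where $\mathcal{X}=\mathcal{E}(L,1)$ is bounded by (\ref{eq:acc_23}). Your first option, that the component of $\mathcal{E}(V,\rho)$ containing the origin is bounded for positive definite polynomials, fails in general: for $V=x_1^2+x_2^2(1-x_1x_2)^2$ and $\rho=1$ that component contains the unbounded curve $x_1=1/x_2$, $x_2\ge 1$.
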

	\begin{proof} Suppose that condition (\ref{eq:ox_4}) is feasible $\forall x \in \mathcal{X}$, subject to $\Delta_{cl} (x) \succeq 0$ at least in the same region. Then, by considering 
		\begin{align}\label{eq:ox_5}
			u_e = u_e (x) = f(x)+g(x)u(x),
		\end{align}	
		we obtain that 
		\begin{align}\label{eq:ox_35}
			\nabla V(x)^\top [f(x)+g(x)u(x)] \preceq -T(x) - \Delta_{cl} (x) \prec 0,
		\end{align}
		which implies that $\dot{V}(x)\prec 0$ in $\mathcal{X}$, i.e., the regional asymptotic stability of the closed-loop system around the origin. If $\mathcal{E}(V,\rho) \subseteq \mathcal{X}$ for some $\rho \in\mathbb{R}_{>0}$, then it follows that $\mathcal{E}(V,\rho)$ is contractive invariant, that is, $\mathcal{X}_c=\mathcal{E}(V,\rho)$, and it provides an estimate of the closed-loop DOA. The related conclusions about global stability are based on standard results from stability theory \cite{khali1}. 
	\end{proof}

	An interesting interpretation of these results is that $u(x)$ stabilizes all systems to which (\ref{eq:ox_4}) holds subject to $\Delta_{cl} (x)\succeq 0$, and this may be true for many different descriptions of $f(x)$ and $g(x)$. This work is about proving that such a \emph{robust stability framework} is particularly suitable for data-based control of polynomial systems. 
	
	Firstly, notice that relation (\ref{eq:ox_4}) in Thm. \ref{thm:thm_3} is equivalent to
	
	{\small
		\begin{align}\label{eq:ox_6}
			\begin{bmatrix} 1 \\ u_e \end{bmatrix}^\top	
			\begin{bmatrix} \mathcal{Q}(x)-T(x) & \star \\
				\Big( -\mathcal{R}[f(x)+g(x)u(x)]-\frac{1}{2}\nabla V(x) \Big) & \mathcal{R} \end{bmatrix} \begin{bmatrix} 1 \\ u_e \end{bmatrix} \succeq 0.	
	\end{align}}

	As a result, the following matrix inequality is a sufficient condition for the validity of (\ref{eq:ox_4}), if it holds $\forall x \in \mathcal{X}$,
	
	{\small
		\begin{align}\label{eq:ox_7}
			\mathcal{M}(x) = \begin{bmatrix} \mathcal{Q}(x)-T(x) & \star \\
				\Big( -\mathcal{R}[f(x)+g(x)u(x)]-\frac{1}{2}\nabla V(x) \Big) & \mathcal{R} \end{bmatrix} \succeq 0.	
	\end{align}}
	
	Unlike (\ref{eq:ox_6}), which is a function of $(x,u_e)$, condition (\ref{eq:ox_7}) depends only on the state $x$, and this is useful when applying Thm. \ref{thm:thm_3} for controller design using SOS programming. 
	
	Due to the product between $\mathcal{R}$ and $u(x)$, conditions (\ref{eq:ox_4}) and (\ref{eq:ox_7}) are \emph{nonlinear} on their decision variables. The same conclusion holds for function $\Delta_{cl} (x)$ defined in (\ref{eq:c5_32}). However, if we consider that 
	\begin{align}\label{eq:ox_37}
		\mathcal{R}=\gamma I_n,~~\gamma \in \mathbb{R}_{>0},
	\end{align}
	then relation (\ref{eq:ox_4}) can be presented as 
	\begin{align}\label{eq:ox_83}
		\nabla V(x)^\top u_e + T(x) \le \mathcal{Q}(x) ~~~~~~~~~~~~~~~~~~~~~~~~~~~\nonumber\\
		- 2[\gamma f(x)+g(x)u_\gamma (x)]^\top u_e + \gamma u_e^\top u_e,
	\end{align}	
	where $u_{\gamma}(x)$ is a new decision variable from which the value of $u(x)$ can be recovered, since
	\begin{align}\label{eq:ox_84}
		u_{\gamma}(x) = \gamma u(x).
	\end{align}	
	By the same token, condition (\ref{eq:ox_7}), when subject to (\ref{eq:ox_37}) and (\ref{eq:ox_84}), becomes the matrix condition below, which is linear on $\{  \mathcal{Q}(x),T(x),V(x),u_{\gamma} (x),\gamma \}$
	\begin{align}\label{eq:ox_127}
		\begin{bmatrix} \mathcal{Q}(x)-T(x) & \star \\
			\Big( -[\gamma f(x)+g(x)u_{\gamma}(x)]-\frac{1}{2}\nabla V(x) \Big) & \gamma I_n \end{bmatrix} \succeq 0.	
	\end{align}	
	
	Then, take into account that (\ref{eq:c5_32}) can be rearranged as follows
	\begin{align}
		\Delta_{cl} (x)= \varrho^\top (x) \gamma^{-1} \varrho (x)- \mathcal{Q}(x), \label{eq:ox_38} \\
		\varrho (x)= \gamma f(x)+g(x)u_{\gamma } (x).~~~~ \label{eq:ox_125}
	\end{align}
	Due to the nonlinearity of (\ref{eq:ox_38}), its application for controller design demands, in general, iterative procedures. When relations (\ref{eq:ox_83})-(\ref{eq:ox_125}) are employed in an algorithm, one needs to guarantee that between iterations it holds that $\Delta_{cl,k+1} (x) \succeq \Delta_{cl,k} (x)$, where from (\ref{eq:ox_38})
	\begin{align}
		\Delta_{cl,k} (x) := \varrho^\top_k (x) \gamma^{-1}_k \varrho_k (x)
		- \mathcal{Q}_k(x),\label{eq:ox_25}\\
		\varrho_k (x) := \gamma_k f(x)+g(x)u_{\gamma ,k} (x).~~~ \label{eq:ox_39}
	\end{align}  
	$\forall k\in\{0,1,2,\cdots \}$. If one obtains $\Delta_{cl,k} (x) \succeq 0$ after some $k=\kappa$ iterations, then closed-loop stabilization follows with $u_k (x) = u_{\gamma ,k} \slash \gamma_k$. In the lemma below, a set of conditions whose feasibility imply that $\Delta_{cl,k} (x)$ is non-decreasing are provided; the proof is similar to the one available in \cite[Prop. 2]{made12}.

	\begin{lemma}\label{lemma:lem_1} Let $\Delta_{cl,k} (x) $ be defined as in (\ref{eq:ox_25})-(\ref{eq:ox_39}), $\forall x \in \mathcal{X}$, $\forall k\in \{0,1,2,\cdots \}$, with $\mathcal{Q}_k : \mathcal{X} \rightarrow \mathbb{R}$, $u_{\gamma ,k} : \mathcal{X} \rightarrow \mathbb{R}^m$, $\gamma_k \in \mathbb{R}_{>0}$. If
		\begin{align}\label{eq:ox_130}
			\gamma_k - \gamma_{k+1} \succeq 0
		\end{align}
		and 
		\begin{align}\label{eq:ox_131}
			\varrho_{k+1} (x)^\top \gamma^{-1}_{k} \varrho_k (x) + \varrho_k (x)^\top \gamma^{-1}_{k} \varrho_{k+1} (x) \nonumber\\
			- 2 \varrho_k (x)^\top \gamma^{-1}_{k} \varrho_k (x)
			+ \mathcal{Q}_k (x) - \mathcal{Q}_{k+1} (x) \succeq 0,
		\end{align}
		then it follows that for all $k\in \{0,1,2,\cdots \}$, $\forall x \in \mathcal{X}$,
		\begin{equation}\label{eq:ox_132}
			\Delta_{cl,k+1} (x) \succeq \Delta_{cl,k} (x) .
		\end{equation}
	\end{lemma}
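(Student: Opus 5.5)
The plan is to write $\Delta_{cl,k+1}(x)-\Delta_{cl,k}(x)$ explicitly and bound its quadratic part from below by a completion-of-squares argument, in the spirit of \cite[Prop. 2]{made12}. Throughout, $x\in\mathcal{X}$ and $k$ are fixed, and $\varrho_k$, $\varrho_{k+1}$, $\mathcal{Q}_k$, $\mathcal{Q}_{k+1}$ denote the values at $x$. From (\ref{eq:ox_25}),
\begin{align*}
\Delta_{cl,k+1}-\Delta_{cl,k} = \varrho_{k+1}^\top \gamma_{k+1}^{-1}\varrho_{k+1} - \varrho_k^\top\gamma_k^{-1}\varrho_k - \mathcal{Q}_{k+1}+\mathcal{Q}_k .
\end{align*}

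\emph{Step 1: pass from $\gamma_{k+1}$ to $\gamma_k$.} By (\ref{eq:ox_130}), $0<\gamma_{k+1}\le\gamma_k$, so $\gamma_{k+1}^{-1}\ge\gamma_k^{-1}$. Hence
\begin{align*}
\varrho_{k+1}^\top\gamma_{k+1}^{-1}\varrho_{k+1}\ge \varrho_{k+1}^\top\gamma_k^{-1}\varrho_{k+1}.
\end{align*}
This is the only place where (\ref{eq:ox_130}) is used. It reduces the problem to a comparison in which only the weight $\gamma_k^{-1}$ appears.

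\emph{Step 2: complete the square.} Expanding the nonnegative quantity $(\varrho_{k+1}-\varrho_k)^\top\gamma_k^{-1}(\varrho_{k+1}-\varrho_k)\ge 0$ gives
\begin{align*}
\varrho_{k+1}^\top\gamma_k^{-1}\varrho_{k+1} \ge \varrho_{k+1}^\top\gamma_k^{-1}\varrho_k+\varrho_k^\top\gamma_k^{-1}\varrho_{k+1}-\varrho_k^\top\gamma_k^{-1}\varrho_k .
\end{align*}
This replaces the quadratic term in $\varrho_{k+1}$ by the bilinear terms that appear in (\ref{eq:ox_131}).

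\emph{Step 3: combine.} Substituting Steps 1--2 into the difference yields
\begin{align*}
\Delta_{cl,k+1}-\Delta_{cl,k}\ge \varrho_{k+1}^\top\gamma_k^{-1}\varrho_k+\varrho_k^\top\gamma_k^{-1}\varrho_{k+1}-2\varrho_k^\top\gamma_k^{-1}\varrho_k+\mathcal{Q}_k-\mathcal{Q}_{k+1}.
\end{align*}
The right-hand side is exactly the left-hand side of (\ref{eq:ox_131}), which is nonnegative by hypothesis. This gives (\ref{eq:ox_132}) for every $x\in\mathcal{X}$ and every $k$.

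There is no real obstacle here. The only step needing care is choosing the correct square to complete in Step 2, namely the one weighted by $\gamma_k^{-1}$ rather than $\gamma_{k+1}^{-1}$, since that is what makes the bound match the form of (\ref{eq:ox_131}) exactly. Step 1 also needs a check that the direction of the inequality is consistent with (\ref{eq:ox_130}), which holds because $\gamma_k,\gamma_{k+1}>0$ and $\gamma\mapsto\gamma^{-1}$ is decreasing on $\mathbb{R}_{>0}$.
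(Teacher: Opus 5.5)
Your proof is correct and matches the paper's argument essentially step for step. The paper also uses $\gamma_{k+1}\le\gamma_k$ to replace $\gamma_{k+1}^{-1}$ by $\gamma_k^{-1}$ in the quadratic term, then expands $(\varrho_{k+1}-\varrho_k)^\top\gamma_k^{-1}(\varrho_{k+1}-\varrho_k)\ge 0$ to lower-bound $\varrho_{k+1}^\top\gamma_k^{-1}\varrho_{k+1}$, which yields exactly (\ref{eq:ox_131}) as the sufficient condition.
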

	\begin{proof} From (\ref{eq:ox_25}), relation (\ref{eq:ox_132}) is equivalent to
		\begin{align}\label{eq:ox_26}
			\varrho_{k+1} (x)^\top \gamma^{-1}_{k+1} \varrho_{k+1} (x) - \mathcal{Q}_{k+1} (x) \succeq \nonumber\\
			\varrho_{k} (x)^\top \gamma^{-1}_{k}  \varrho_{k} (x) - \mathcal{Q}_k (x).
		\end{align}
		In order to verify (\ref{eq:ox_26}), let us firstly suppose that (\ref{eq:ox_130}) holds. Then, when subject to (\ref{eq:ox_130}), condition (\ref{eq:ox_26}) holds true if 
		\begin{align}\label{eq:ox_27}
			\varrho_{k+1} (x)^\top \gamma^{-1}_{k} \varrho_{k+1} (x) - \mathcal{Q}_{k+1} (x) \succeq \nonumber\\ 
			\varrho_{k} (x)^\top \gamma^{-1}_{k}  \varrho_{k} (x)  
			- \mathcal{Q}_k (x).    
		\end{align}
		Next, consider the fact that 
		\begin{align}
			[\varrho_{k+1} (x) - \varrho_{k} (x)]^\top \gamma^{-1}_{k} [ \varrho_{k+1} (x) -\varrho_{k} (x)] \succeq 0,
		\end{align}
		which is equivalent to
		\begin{align}\label{eq:ox_28}
			\varrho_{k+1} (x)^\top \gamma^{-1}_{k} \varrho_{k+1} (x) \succeq 
			\varrho_{k+1} (x)^\top \gamma^{-1}_{k} \varrho_{k} (x) \nonumber\\
			+ \varrho_{k} (x)^\top \gamma^{-1}_{k} \varrho_{k+1} (x)
			- \varrho_{k} (x)^\top \gamma^{-1}_{k} \varrho_{k} (x).
		\end{align}
		Since the product $\varrho_{k+1} (x)^\top \gamma^{-1}_{k} \varrho_{k+1} (x)$ in the left-hand side of (\ref{eq:ox_27}) has a lower bound given by the expression on the right-hand side of (\ref{eq:ox_28}), one concludes that the inequality below is a sufficient condition for (\ref{eq:ox_27}) 
		\begin{align}\label{eq:ox_29}
			\varrho_{k+1} (x)^\top \gamma^{-1}_{k} \varrho_{k} (x)
			+ \varrho_{k} (x)^\top \gamma^{-1}_{k} \varrho_{k+1} (x) \nonumber\\
			- 2 \varrho_{k} (x)^\top \gamma^{-1}_{k} \varrho_{k} (x) \succeq \mathcal{Q}_{k+1} (x) - \mathcal{Q}_k (x),
		\end{align}
		which is identical to (\ref{eq:ox_131}). Thus, if (\ref{eq:ox_130})-(\ref{eq:ox_131}) hold for all $k\in \{0,1,2,\cdots \}$, $\forall x \in \mathcal{X}$, then (\ref{eq:ox_27}) and (\ref{eq:ox_132}) are verified as well.
	\end{proof}
	
	In the remainder of this work, we apply Thm. \ref{thm:thm_3} and different versions of Lem. \ref{lemma:lem_1} to derive new SOS strategies for data-driven control of polynomial systems using noisy measurements.

	\section{Applications to Data-Driven Control}
	
	This section presents two new iterative strategies for data-driven controller design. Strategy 1 uses the collected data to estimate a nominal model for the plant and, subsequently, employs a new definition of the set $\mathcal{C}$ and the results of Thm. \ref{thm:thm_3} to guarantee that all models in a large enough set  containing the reference dynamics and the whole set $\mathcal{C}$ are stable by a certain control law. In Strategy 2, instead of an indirect robust condition around a nominal model, a direct data-based approach is employed for controller design, where the data enters the stability conditions via the S-procedure.
	
	\subsection*{Strategy 1: Robust Stability Around a Reference Dynamics}
	
	If $\textbf{A}\succ 0$ and $\textbf{Q}\succ 0$ in condition (\ref{eq:ox_78}), then the set of matrices $[A~B]$ consistent with the data can be equivalently presented as
	
	{\small
	\begin{align}
	 \label{eq:ox_63}
			\mathcal{C}:= \{ [A~B]=Z^\top_{AB}: (Z_{AB} - \xi ) \textbf{Q}^{-1} ( Z_{AB} - \xi )^\top \preceq \textbf{A}^{-1} \}.
	\end{align}
	}
	
	Such an equivalence follows from the results of \cite[Thm. 1]{bekker}. Note that although $\textbf{Q}$ is not required to be invertible when employed in condition (\ref{eq:ox_78}), there always exists some matrix $\textbf{Q}_2 \succ \textbf{Q}$, $\textbf{Q}_2 \succ 0$, such that it holds with $\textbf{Q}$ replaced by $\textbf{Q}_2$ and the equivalent condition in (\ref{eq:ox_63}) is well defined. Thus, without loss of generality, we consider from this point on that $\textbf{Q}\succ 0$, which guarantees the existence of $\textbf{Q}^{-1}$. The fact that $\bf{A}$ is positive definite has been proven in \cite[Prop. 1]{biso1}. Then, let us consider the assumption below
	
	\begin{assumption}\label{ass:a_3} Let $\bf{A}\succ 0$ and $\textbf{Q}\succ 0$.
	\end{assumption} 
	    
	In this section, we prove that description (\ref{eq:ox_63}) can be employed in combination with Thm. \ref{thm:thm_3} for data-driven controller design. Note that according to Thm. \ref{thm:thm_3}, the quadratic function on the right-hand side of (\ref{eq:ox_4}) is non-positive if $\Delta_{cl} (x) \succeq 0$ and $u_e$ is given, for example, by (\ref{eq:ox_5}). However, there may exist a whole set of closed-loop dynamics, i.e., a set of descriptions of $u_e = u_e (x)$ around a \emph{reference dynamics} $u^\star_{e} (x) = f(x)+g(x)u(x)$, that verify the inequality below
	\begin{align}\label{eq:ox_65}
		\mathcal{Q}(x) - 2[f(x)+g(x)u(x)]^\top \mathcal{R} u_e + u_e^\top \mathcal{R} u_e \preceq 0,
	\end{align}	
	which can be equivalently presented as 
	\begin{align}\label{eq:ox_66}
		(u_e - u^\star_{e} (x))^\top \mathcal{R} (u_e - u^\star_{e} (x)) \preceq \Delta_{cl} (x),
	\end{align}	 
	where $\Delta_{cl} (x) $ is from (\ref{eq:c5_32})
	\begin{align}\label{eq:ox_99}
		\Delta_{cl} (x) = u^\star_{e}(x)^\top \mathcal{R} u^\star_{e} (x) - \mathcal{Q}(x).
	\end{align}	
	Thus, if (\ref{eq:ox_4}) or (\ref{eq:ox_7}) hold with $\Delta_{cl} (x) \succeq 0$, it follows that $V(x)$ is a CLF not only for $u^\star_{e} (x)$, but for all dynamics $u_e (x)$ that fulfill (\ref{eq:ox_66}). 
	
	In the following discussion consider that, from data, we obtain
	\begin{align}\label{eq:ox_90}
		[A_\star~B_\star] = \bf{\xi}^\top = [-\textbf{A}^{-1}\textbf{B}]^\top,
	\end{align}
	and that $u^\star_{e} (x)$, which lies at the center of an \emph{ellipsoidal set of models} that fulfill (\ref{eq:ox_66}), is given by 
	\begin{align}\label{eq:ox_87}
		u^\star_{e} (x) = [A_\star~B_\star] \begin{bmatrix} Z(x)\\ W(x) u(x)\end{bmatrix},
	\end{align}
	if some state feedback $u(x)$ is known. This means that   
	\begin{align}\label{eq:ox_88}
		f(x) = A_\star Z(x),~~g(x) = B_\star W(x),
	\end{align}
	and that if (\ref{eq:ox_4}) holds with $\Delta_{cl} (x) \succeq 0$, then there may exist a set of values for $[A~B]$ around $[A_\star~B_\star]$ such that (\ref{eq:ox_66}) is fulfilled with
	\begin{align}\label{eq:ox_89}
		u_e = u_e (x) = [A~B] \begin{bmatrix} Z(x)\\ W(x) u(x)\end{bmatrix}.
	\end{align} 
	
	The next lemma characterizes a set $\mathcal{C}_d$ of matrix coefficients $[A~B]$ that, if applied to (\ref{eq:ox_89}), lead to the feasibility of (\ref{eq:ox_65}) and (\ref{eq:ox_66}) around a nominal model $u^\star_{e} (x)$ given by (\ref{eq:ox_87}) and (\ref{eq:ox_88}). In Lemma \ref{lemma:lem_6}, $\mathcal{C}_d$ is not yet related to the size of the set $\mathcal{C}$ of models consistent with the data.

	\begin{lemma}\label{lemma:lem_6} Suppose that Assumptions \ref{ass:a_1}, \ref{ass:a_2} and \ref{ass:a_3} hold. Consider that $f(x)$ and $g(x)$ are given according to (\ref{eq:ox_88}), and that $u_{e} (x)$ is described by (\ref{eq:ox_89}), for a given control law $u(x)$. In addition, suppose that $\mathcal{Q}(x)$ is decomposed as 
		\begin{equation}\label{eq:ox_74}
			\mathcal{Q}(x)= \begin{bmatrix} Z(x)\\ W(x) u(x)\end{bmatrix}^\top Q_3
			\begin{bmatrix} Z(x)\\ W(x) u(x)\end{bmatrix},
		\end{equation}
		where $Q_3 \in \mathbb{S}^{N+q}$ is constant. In addition, consider that 
		\begin{align}\label{eq:ox_120}
			[A_\star~B_\star]^\top \mathcal{R} [A_\star~B_\star] - Q_3 \succeq 0,
		\end{align} 
		and define the convenient set below
		\begin{align}\label{eq:ox_91}
			\mathcal{C}_d:= \Big\{ [A~B]: \Big([A~B]-[A_\star~B_\star] \Big)^\top \mathcal{R} \Big([A~B]-[A_\star~B_\star] \Big) \nonumber\\
			\preceq [A_\star~B_\star]^\top \mathcal{R} [A_\star~B_\star] - Q_3 \Big\}.
		\end{align}
		It follows that $\forall [A~B]\in \mathcal{C}_d$, condition (\ref{eq:ox_65}) hold.
	\end{lemma}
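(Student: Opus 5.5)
The plan is to reduce condition (\ref{eq:ox_65}) to its equivalent form (\ref{eq:ox_66}), and then to show that membership in $\mathcal{C}_d$ forces (\ref{eq:ox_66}) pointwise in $x$ by sandwiching with the common regressor vector. Throughout, write
\begin{equation*}
\zeta(x) := \begin{bmatrix} Z(x)\\ W(x) u(x)\end{bmatrix}, \qquad \Theta := [A~B], \qquad \Theta_\star := [A_\star~B_\star],
\end{equation*}
so that $u_e(x) = \Theta\zeta(x)$ by (\ref{eq:ox_89}) and $u_e^\star(x) = \Theta_\star \zeta(x)$ by (\ref{eq:ox_87})--(\ref{eq:ox_88}).

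First I verify the algebraic equivalence of (\ref{eq:ox_65}) and (\ref{eq:ox_66}). Expanding $(u_e-u_e^\star)^\top \mathcal{R}(u_e-u_e^\star)$ and subtracting $\Delta_{cl}(x) = u_e^{\star\top}\mathcal{R}u_e^\star - \mathcal{Q}(x)$ from (\ref{eq:ox_99}), the terms $u_e^{\star\top}\mathcal{R}u_e^\star$ cancel. What remains is exactly $\mathcal{Q}(x) - 2u_e^{\star\top}\mathcal{R}u_e + u_e^\top\mathcal{R}u_e$. This step uses only the symmetry of $\mathcal{R}$ and the identity $u_e^\star(x) = f(x)+g(x)u(x)$, which holds by (\ref{eq:ox_88}). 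It therefore suffices to establish (\ref{eq:ox_66}) for every $\Theta \in \mathcal{C}_d$ and every $x \in \mathcal{X}$.

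Next I rewrite both sides of (\ref{eq:ox_66}) as quadratic forms in $\zeta(x)$. The left-hand side becomes $\zeta(x)^\top (\Theta-\Theta_\star)^\top \mathcal{R}(\Theta-\Theta_\star)\zeta(x)$. Using the decomposition (\ref{eq:ox_74}), the right-hand side becomes
\begin{equation*}
\Delta_{cl}(x) = \zeta(x)^\top\big(\Theta_\star^\top\mathcal{R}\Theta_\star - Q_3\big)\zeta(x).
\end{equation*}
Their difference is $\zeta(x)^\top M\,\zeta(x)$ with
\begin{equation*}
M := \Theta_\star^\top \mathcal{R}\Theta_\star - Q_3 - (\Theta-\Theta_\star)^\top\mathcal{R}(\Theta-\Theta_\star).
\end{equation*}
By the defining inequality of $\mathcal{C}_d$ in (\ref{eq:ox_91}), $M \succeq 0$ whenever $\Theta\in\mathcal{C}_d$. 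A congruence by the vector $\zeta(x)$ then gives $\zeta(x)^\top M\zeta(x)\ge 0$ for all $x$, which is (\ref{eq:ox_66}) and hence (\ref{eq:ox_65}).

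Finally I will remark on the role of (\ref{eq:ox_120}). It guarantees that the right-hand side of (\ref{eq:ox_91}) is positive semidefinite, so $\mathcal{C}_d$ is a nonempty matrix ellipsoid containing $\Theta_\star$. By the same congruence it also yields $\Delta_{cl}(x)\succeq 0$, tying the result to Thm.~\ref{thm:thm_3}. Assumptions \ref{ass:a_1}--\ref{ass:a_3} enter only to make $\Theta_\star=\xi^\top$ well defined through $\textbf{A}^{-1}$.

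I expect no real obstacle; the argument is a direct computation. The one point needing care is bookkeeping in the cross terms. It must be checked that the reference dynamics $u_e^\star$ appearing in (\ref{eq:ox_65}) is the same $\Theta_\star\zeta(x)$ built with the same $u(x)$ as $u_e$, so that both sides share the regressor $\zeta(x)$ and the matrix inequality transfers to the scalar one.
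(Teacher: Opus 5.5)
Your proposal is correct and follows the paper's own proof. Both sandwich the defining inequality of $\mathcal{C}_d$ in (\ref{eq:ox_91}) with the regressor $\zeta(x)$, use (\ref{eq:ox_74}) to identify the right-hand side with $\Delta_{cl}(x)$, and obtain (\ref{eq:ox_66}), which is equivalent to (\ref{eq:ox_65}). Your explicit expansion verifying that equivalence fills in a step the paper takes for granted.
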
 
	\begin{proof}
		By multiplying the matrix inequality in (\ref{eq:ox_91}) from the left by $[Z(x)^\top~u(x)^\top W(x)^\top]$ and from the right by $[Z(x)^\top~u(x)^\top W(x)^\top]^\top$, one obtains
		
		{\small 
			\begin{align}\label{eq:ox_73}
				\begin{bmatrix} Z(x)\\ W(x) u(x)\end{bmatrix}^\top \cdot \Big( [A~B] - [A_\star~B_\star]\Big)^\top \mathcal{R} \Big( [A~B] - [A_\star~B_\star]\Big) \cdot \nonumber\\
				\begin{bmatrix} Z(x)\\ W(x) u(x)\end{bmatrix} \preceq \begin{bmatrix} Z(x)\\ W(x) u(x)\end{bmatrix}^\top \cdot [A_\star~B_\star]^\top \mathcal{R} [A_\star~B_\star] \nonumber\\
				\cdot \begin{bmatrix} Z(x)\\ W(x) u(x)\end{bmatrix} - \mathcal{Q}(x).
		\end{align}}
	
		As a result, (\ref{eq:ox_66}) is feasible, and it implies (\ref{eq:ox_65}). 
	\end{proof}

	Next, we present sufficient conditions for the set $\mathcal{C}_d$ in (\ref{eq:ox_91}) to be at least as large as the set $\mathcal{C}$ of all plants that could have generated the data. In the lemma below, notice that condition (\ref{eq:ox_63}) is the same as
	\begin{align}\label{eq:ox_70}
		\mathcal{C}:= \{ [A~B]: \Big([A~B]-[A_\star~B_\star] \Big)^\top \cdot \textbf{Q}^{-1}  \nonumber\\
		\cdot \Big([A~B]-[A_\star~B_\star] \Big)\preceq \textbf{A}^{-1} \}.
	\end{align}	
	
	\begin{lemma}\label{lemma:lem_4} Let the sets $\mathcal{C}$ and $\mathcal{C}_d$ be defined, respectively, by (\ref{eq:ox_70}) and (\ref{eq:ox_91}). It follows that $\mathcal{C}_d \supseteq \mathcal{C}$, if
		\begin{equation}\label{eq:ox_85}
			\mathcal{R} \preceq \textbf{Q}^{-1}
		\end{equation}
		and
		\begin{equation}\label{eq:ox_86}
			[A_\star~B_\star]^\top \mathcal{R} [A_\star~B_\star] - Q_3 \succeq \textbf{A}^{-1} .
		\end{equation}
	\end{lemma}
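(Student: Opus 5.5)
Take an arbitrary $[A~B]\in\mathcal{C}$ and show directly that it belongs to $\mathcal{C}_d$ by chaining three matrix inequalities. Write $\Delta_{AB}:=[A~B]-[A_\star~B_\star]\in\mathbb{R}^{n\times(N+q)}$. By (\ref{eq:ox_70}), membership in $\mathcal{C}$ means $\Delta_{AB}^\top \textbf{Q}^{-1}\Delta_{AB}\preceq \textbf{A}^{-1}$. By (\ref{eq:ox_91}), membership in $\mathcal{C}_d$ means $\Delta_{AB}^\top \mathcal{R}\Delta_{AB}\preceq [A_\star~B_\star]^\top\mathcal{R}[A_\star~B_\star]-Q_3$. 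Before starting, I would check that the dimensions agree. From (\ref{eq:ox_18}) and (\ref{eq:ox_126}), $\textbf{C}$ and hence $\textbf{Q}$ are $n\times n$, while $\textbf{A}$ is $(N+q)\times(N+q)$. This is consistent with $\mathcal{R}\in\mathbb{S}^n$ and with $Q_3\in\mathbb{S}^{N+q}$, so both (\ref{eq:ox_85}) and (\ref{eq:ox_86}) are comparisons between matrices of the same size.

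The first step uses congruence. Condition (\ref{eq:ox_85}) says $\textbf{Q}^{-1}-\mathcal{R}\succeq 0$. Multiplying on the left by $\Delta_{AB}^\top$ and on the right by $\Delta_{AB}$ preserves positive semidefiniteness, since $v^\top\Delta_{AB}^\top(\textbf{Q}^{-1}-\mathcal{R})\Delta_{AB}v\ge 0$ for every $v\in\mathbb{R}^{N+q}$. This gives $\Delta_{AB}^\top\mathcal{R}\Delta_{AB}\preceq\Delta_{AB}^\top\textbf{Q}^{-1}\Delta_{AB}$.

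The second step applies the defining inequality of $\mathcal{C}$ in (\ref{eq:ox_70}), which bounds the right-hand side by $\textbf{A}^{-1}$. The third step applies (\ref{eq:ox_86}), which bounds $\textbf{A}^{-1}$ by $[A_\star~B_\star]^\top\mathcal{R}[A_\star~B_\star]-Q_3$. The Loewner order is transitive, so chaining the three bounds yields exactly the inequality defining $\mathcal{C}_d$ in (\ref{eq:ox_91}). Hence $[A~B]\in\mathcal{C}_d$, and since $[A~B]$ was arbitrary, $\mathcal{C}\subseteq\mathcal{C}_d$.

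No step is really hard. The only points needing care are the congruence step and its dimensional bookkeeping, together with the well-posedness of (\ref{eq:ox_70}) itself. For the latter I would invoke Assumption \ref{ass:a_3}, which gives $\textbf{A}\succ 0$ and $\textbf{Q}\succ 0$ so that both inverses exist, and rely on the equivalence between (\ref{eq:ox_78}) and (\ref{eq:ox_63}) stated earlier via \cite[Thm. 1]{bekker}. A side remark: (\ref{eq:ox_86}) together with $\textbf{A}^{-1}\succ 0$ implies (\ref{eq:ox_120}), so the hypothesis of Lemma \ref{lemma:lem_6} is automatically met. Lemma \ref{lemma:lem_6} then yields (\ref{eq:ox_65}) for every plant in $\mathcal{C}$.
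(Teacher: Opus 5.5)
Your proof is correct, but it reaches the inclusion by a different mechanism than the paper. The paper uses Schur complements. It rewrites membership in $\mathcal{C}$ as $\begin{bmatrix} \textbf{A}^{-1} & \star \\ [A~B]-\xi^\top & \textbf{Q}\end{bmatrix}\succeq 0$ and membership in $\mathcal{C}_d$ as $\begin{bmatrix} \xi\mathcal{R}\xi^\top - Q_3 & \star \\ [A~B]-\xi^\top & \mathcal{R}^{-1}\end{bmatrix}\succeq 0$, where $\xi^\top=[A_\star~B_\star]$. It then observes that the difference of these two block matrices is block diagonal, with blocks $\xi\mathcal{R}\xi^\top-Q_3-\textbf{A}^{-1}$ and $\mathcal{R}^{-1}-\textbf{Q}$. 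The off-diagonal terms cancel, and (\ref{eq:ox_85})--(\ref{eq:ox_86}) make the difference positive semidefinite.

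You instead stay with the quadratic forms. You chain $\Delta_{AB}^\top\mathcal{R}\Delta_{AB}\preceq\Delta_{AB}^\top\textbf{Q}^{-1}\Delta_{AB}\preceq\textbf{A}^{-1}\preceq\xi\mathcal{R}\xi^\top-Q_3$ by a congruence and transitivity of the Loewner order. Your version is more elementary and slightly more general. It needs no Schur complement lemma, and it avoids a step the paper leaves implicit: that $\mathcal{R}\preceq\textbf{Q}^{-1}$ implies $\mathcal{R}^{-1}\succeq\textbf{Q}$, i.e.\ inversion reverses order on positive definite matrices. Your argument would therefore go through even if $\mathcal{R}$ were not invertible.

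The paper's block form, in turn, presents both $\mathcal{C}$ and $\mathcal{C}_d$ as matrix inequalities affine in $[A~B]$. That matches the LMI/SOS representation used elsewhere in the paper. For the inclusion itself, the two arguments carry the same content.
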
 
	\begin{proof} The matrix inequality in condition (\ref{eq:ox_70}) is equivalent to
		\begin{equation}
			\begin{bmatrix} \textbf{A}^{-1} & \star \\ \Big( [A~B]-\xi^\top \Big) & \textbf{Q}\end{bmatrix} \succeq 0, \nonumber
		\end{equation}
		and that of condition (\ref{eq:ox_91}) is equivalent to
		\begin{equation}
			\begin{bmatrix} \xi \mathcal{R} \xi^\top - Q_3 & \star \\ \Big( [A~B]-\xi^\top \Big) & \mathcal{R}^{-1}\end{bmatrix} \succeq 0. \nonumber
		\end{equation}
		Thus, if (\ref{eq:ox_85})-(\ref{eq:ox_86}) are fulfilled, the following inequality holds
		{\small
			\begin{equation}\label{eq:ox_121}
				\begin{bmatrix} \xi \mathcal{R} \xi^\top - Q_3 & \star \\ \Big( [A~B]-\xi^\top \Big) & \mathcal{R}^{-1}\end{bmatrix} - \begin{bmatrix} \textbf{A}^{-1} & \star \\ \Big( [A~B]-\xi^\top \Big) & \textbf{Q}\end{bmatrix} \succeq 0, \nonumber
		\end{equation}} 
		which means that $\forall [A~B]\in \mathcal{C}$, one also has $[A~B]\in \mathcal{C}_d$, i.e., $\mathcal{C}_d \supseteq \mathcal{C}$.
	\end{proof}
	
	This lemma is key to allow connecting the stabilizability results of Thm. \ref{thm:thm_3} with the well-known definition in (\ref{eq:ox_78}) of a set $\mathcal{C}$ of dynamics consistent with data. Such a relation is addressed in the next theorem, where it is proved that if reference model (\ref{eq:ox_88}) obtained through data is stabilized by some state feedback $u(x)$ and one can prove that a large enough set $\mathcal{C}_d \supseteq \mathcal{C}$ of other models is also held stable by the same feedback control, then the asymptotic stabilization of all plants consistent with the data may follow. 
	
	\begin{theorem}\label{thm:thm_5} Consider a nonlinear system that verifies Assumption \ref{ass:a_1} and is given by (\ref{eq:sos_2}), where $[A~B]$ is unknown. Suppose that a collected data set verifies Assumptions \ref{ass:a_2} and \ref{ass:a_3}, where matrices (\textbf{A},\textbf{B},\textbf{C}) are determined according to (\ref{eq:ox_18}). Assume that for some state feedback $u(x)$, a reference closed-loop dynamics given by (\ref{eq:ox_90})-(\ref{eq:ox_88}) fulfills condition (\ref{eq:ox_4}) $\forall (x,u_e )\in \mathcal{X}\times \mathbb{R}^n$, with $\mathcal{Q}(x)$ given by (\ref{eq:ox_74}). If (\ref{eq:ox_85})-(\ref{eq:ox_86}) hold true, then $u(x)$ asymptotically stabilizes all plants (\ref{eq:sos_2}) where $[A~B]\in \mathcal{C}$, $\forall x(0) \in \mathcal{X}$. If $\mathcal{E}(V,\rho) \subseteq \mathcal{X}$ for some $\rho \in\mathbb{R}_{>0}$, then $\mathcal{E}(V,\rho)$ is contractive invariant and it is an estimate of the closed-loop DOA. If $\mathcal{X}=\mathbb{R}^n$ and $V(x)$ is radially unbounded, then stability holds globally.
	\end{theorem}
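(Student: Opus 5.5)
The plan is to chain Lemma \ref{lemma:lem_4}, Lemma \ref{lemma:lem_6} and the argument of Thm. \ref{thm:thm_3}. First, since (\ref{eq:ox_85})-(\ref{eq:ox_86}) hold, Lemma \ref{lemma:lem_4} gives $\mathcal{C}\subseteq\mathcal{C}_d$. Moreover, (\ref{eq:ox_86}) together with $\textbf{A}\succ 0$ (Assumption \ref{ass:a_3}) implies (\ref{eq:ox_120}), so the hypotheses of Lemma \ref{lemma:lem_6} are met. Hence, for any fixed $[A~B]\in\mathcal{C}$, the closed-loop vector field
\begin{align*}
u_e(x)=[A~B]\begin{bmatrix} Z(x)\\ W(x)u(x)\end{bmatrix}=AZ(x)+BW(x)u(x)
\end{align*}
satisfies (\ref{eq:ox_65}), equivalently (\ref{eq:ox_66}), for every $x\in\mathcal{X}$. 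Here $u^\star_e(x)$ is the reference dynamics (\ref{eq:ox_87}) and $\mathcal{Q}(x)$ is given by (\ref{eq:ox_74}).

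Next I would use the hypothesis that (\ref{eq:ox_4}) holds for the reference model (\ref{eq:ox_88}) for all pairs $(x,u_e)\in\mathcal{X}\times\mathbb{R}^n$. Since it holds for every $u_e$, it holds in particular along $u_e=u_e(x)$ for the true plant, giving
\begin{align*}
\nabla V(x)^\top u_e(x)+T(x)\le \mathcal{Q}(x)-2u^\star_e(x)^\top\mathcal{R}u_e(x)+u_e(x)^\top\mathcal{R}u_e(x).
\end{align*}
By (\ref{eq:ox_65}) the right-hand side is nonpositive. Therefore $\dot V(x)=\nabla V(x)^\top u_e(x)\le -T(x)$, which is negative for $x\neq0$ in $\mathcal{X}$ because $T$ is positive definite. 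This is exactly the mechanism in the proof of Thm. \ref{thm:thm_3}. The difference is that the role of $\Delta_{cl}(x)\succeq0$ is now played by (\ref{eq:ox_66}) applied to the perturbed plant rather than to the reference plant itself.

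Since $V$ is positive definite and $[A~B]\in\mathcal{C}$ was arbitrary, the same $V$ is a common Lyapunov function for all plants consistent with the data, and asymptotic stability of the origin follows. The remaining claims follow exactly as in Thm. \ref{thm:thm_3}. If $\mathcal{E}(V,\rho)\subseteq\mathcal{X}$, then $V$ is nonincreasing on $\mathcal{E}(V,\rho)$, so the set is forward invariant, and LaSalle/Lyapunov arguments \cite{khali1} give convergence to the origin, i.e.\ contractive invariance and a DOA estimate. If $\mathcal{X}=\mathbb{R}^n$ and $V$ is radially unbounded, stability is global.

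The main subtlety I expect is justifying that (\ref{eq:ox_86}) implies (\ref{eq:ox_120}), and that Lemma \ref{lemma:lem_6} is applied to the quadratic form in the stacked vector $[Z(x)^\top~u(x)^\top W(x)^\top]^\top$ consistently with (\ref{eq:ox_74}). A second point to handle carefully is that the hypothesis ``(\ref{eq:ox_4}) for all $u_e$'' is what lets us substitute the true, unknown closed-loop field, rather than only the reference one. The remaining steps are routine.
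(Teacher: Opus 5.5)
Your proposal is correct and follows essentially the same route as the paper: (\ref{eq:ox_86}) gives (\ref{eq:ox_120}) so Lemma~\ref{lemma:lem_6} applies, Lemma~\ref{lemma:lem_4} gives $\mathcal{C}\subseteq\mathcal{C}_d$, and then (\ref{eq:ox_65}) holding for every $[A~B]\in\mathcal{C}$ yields $\dot V\le -T$. You also make explicit a step the paper leaves implicit: because (\ref{eq:ox_4}) holds for all $u_e\in\mathbb{R}^n$, you may substitute the true, unknown closed-loop field $AZ(x)+BW(x)u(x)$.
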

	\begin{proof} If, for some $u(x)$, condition (\ref{eq:ox_4}) is feasible $\forall (x,u_e )\in \mathcal{X}\times \mathbb{R}^n$, with $f(x)$ and $g(x)$ given by (\ref{eq:ox_88}), $\mathcal{Q}(x)$ as in (\ref{eq:ox_74}), and inequality (\ref{eq:ox_86}) holds, then $\Delta_{cl} (x) \succeq 0$ in (\ref{eq:ox_99}) and for all $u_e$ given by (\ref{eq:ox_89}), where $[A~B]\in \mathcal{C}_d$, conditions (\ref{eq:ox_73}) and (\ref{eq:ox_65}) are verified as well. Notice that (\ref{eq:ox_86}) implies (\ref{eq:ox_120}), and thus $\mathcal{C}_d$ is well-defined. If (\ref{eq:ox_85}) is feasible, then, according to Lem. \ref{lemma:lem_4}, $\mathcal{C}_d \supseteq \mathcal{C}$, and $u(x)$ locally asymptotically stabilizes all closed-loop dynamics (\ref{eq:ox_89}) where $[A~B]\in \mathcal{C}$. The set $\mathcal{C}_d$ leads to a special case of dynamics that are stabilizable by $u(x)$, and the set $\mathcal{C}$, which in principle simply describes a set of dynamics consistent with the data, happens to be a subset of $\mathcal{C}_d$, if (\ref{eq:ox_85})-(\ref{eq:ox_86}) are fulfilled. The conclusions on an estimated DOA and on global stability are standard.
	\end{proof} 
	   
	By employing Thm. \ref{thm:thm_5} and an adaptation of Lem. \ref{lemma:lem_1}, a constructive procedure for data-driven feedback design is proposed. We consider some  $L(x) \in \mathcal{P}[x]$, $\mathcal{Q}(x) \in \mathcal{P}[x]$, $u(x) \in \mathcal{P}^{m\times 1} [x]$, $V(x) \in \mathcal{P}[x]$, $T(x) \in \mathcal{P}[x]$, $\alpha(x) \in \Sigma^{n+1} [x]$, and some $\mathcal{R}\in\mathbb{S}$, $\mathcal{R}\succ 0$. Then, using reference model (\ref{eq:ox_88}), we employ the following condition 
	
	{\small
		\begin{align}\label{eq:ox_133}
			\begin{bmatrix} \mathcal{Q}(x)-T(x) & \star \\
				\Big( -\mathcal{R}[f(x)+g(x)u(x)]-\frac{1}{2}\nabla V(x) \Big) & \mathcal{R} \end{bmatrix} \nonumber\\
			-\alpha (x)(1-L(x)) \in \Sigma^{n+1} [x],	
	\end{align}}
	which is an SOS-matrix certificate that is sufficient for the feasibility of (\ref{eq:ox_7}), $\forall x \in \mathcal{E}(L,1)$. Notice that this sufficiency result holds because the feasibility of SOS condition (\ref{eq:ox_133}) is equivalent to the existence of a set of polynomial matrices $W_j(x)$ of compatible dimensions, $j\in\{1,2,\cdots,n_w\}$, $n_w \in \mathbb{N}_{>0}$, such that
	
	{\small
		\begin{align}\label{eq:ox_136}
				\mathcal{M}(x) = \sum^{n_w}_{j=1} W_j(x) W_j(x)^\top + \alpha (x)(1-L(x)),
		\end{align}
	} 
	
	and since $1-L(x)\succeq 0$, $\forall x\in \mathcal{E}(L,1)$, and $\alpha (x)\succeq 0$ by assumption, it follows that $\mathcal{M}(x)\succeq 0$ as well, $\forall x\in \mathcal{E}(L,1)$.
	
	In addition, consider that
	\begin{align}
		L(x) - \beta_l ||x||^{2 n_l} \in \Sigma [x],~~~~~~~~~~~~~ \label{eq:acc_23}\\
		V (x) - \beta_v ||x||^{2 n_v} - s_{vl}(x) (1-L(x)) \in \Sigma [x] ,\label{eq:acc_21}\\
		T (x) - \beta_t ||x||^{2 n_t} - s_{tl}(x) (1-L(x)) \in \Sigma [x],\label{eq:acc_22}
	\end{align}
	for some $\{ \beta_v ,\beta_t ,\beta_l \}\in \mathbb{R}_{>0}$, $\{ n_v ,n_t ,n_l \}\in \mathbb{N}_{>0}$, $\{s_{tl},s_{vl}\} \in \Sigma [x]$. Conditions (\ref{eq:acc_23})-(\ref{eq:acc_22}) mean that $\mathcal{X} = \mathcal{E}(L,1)$, and that $V(x)$ and $T(x)$ need to be positive definite only in that region.
	Next, let
	\begin{align}\label{eq:ox_115}
		\mathcal{R} = \textbf{Q}^{-1},
	\end{align}
	which fulfills (\ref{eq:ox_85}). Then, (\ref{eq:ox_133}) is tested iteratively and, in order to guarantee that $\Delta_{cl,k+1} (x) \succeq \Delta_{cl,k} (x)$ between repetitions, the conditions of the following lemma are taken into account, where for $\forall k\in \{0,1,2,\cdots \}$, $\forall x\in\mathcal{X}$, we  define 
	\begin{align}\label{eq:ox_92}
		\bar{\varrho}_k (x)= f(x)+g(x)u_{k} (x).
	\end{align}

	\begin{lemma}\label{lemma:lem_2} Suppose that $\mathcal{R} \succ 0$ is known. In this case, $\Delta_{cl,k} (x) $ is defined as follows, $\forall k\in \{0,1,2,\cdots \}$, $\forall x\in\mathcal{X}$, 
		\begin{align}\label{eq:ox_93}
			\Delta_{cl,k} (x) := \bar{\varrho}^\top_k (x) \mathcal{R} \bar{\varrho} (x) 
			- \mathcal{Q}_k(x),
		\end{align}
		with $\mathcal{Q}_k (x) \in \mathcal{P}[x]$, $u_k (x) \in \mathcal{P}^{m\times 1} [x]$. If $\forall k\in \{0,1,2,\cdots \}$
		\begin{align}\label{eq:ox_94}
			\bar{\varrho}_{k+1} (x)^\top \mathcal{R} \bar{\varrho}_k (x) + \bar{\varrho}_k (x)^\top \mathcal{R} \bar{\varrho}_{k+1} (x) \nonumber\\
			- 2 \bar{\varrho}_k (x)^\top \mathcal{R} \bar{\varrho}_k (x)
			+ \mathcal{Q}_k (x) - \mathcal{Q}_{k+1} (x) \in \Sigma [x],
		\end{align}
		then it follows that $\forall x\in\mathcal{X}$
		\begin{align}\label{eq:ox_95}
			\Delta_{cl,k+1} (x) \succeq \Delta_{cl,k} (x).
		\end{align}
	\end{lemma}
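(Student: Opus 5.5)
The plan is to follow the proof of Lemma~\ref{lemma:lem_1}, with the simplification that $\mathcal{R}$ is now fixed. There is therefore no analogue of condition (\ref{eq:ox_130}) on the weights. We read $\bar{\varrho}(x)$ in (\ref{eq:ox_93}) as $\bar{\varrho}_k(x)$. Then $\Delta_{cl,k+1}(x)\succeq\Delta_{cl,k}(x)$ is equivalent to
\begin{align}
\bar{\varrho}_{k+1}(x)^\top\mathcal{R}\bar{\varrho}_{k+1}(x)-\mathcal{Q}_{k+1}(x)\succeq \bar{\varrho}_{k}(x)^\top\mathcal{R}\bar{\varrho}_{k}(x)-\mathcal{Q}_{k}(x),\nonumber
\end{align}
for all $x\in\mathcal{X}$. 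This is exactly (\ref{eq:ox_27}) with $\gamma_k^{-1}$ replaced by $\mathcal{R}$, so the monotonicity-of-weights step of Lemma~\ref{lemma:lem_1} is not needed.

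The key step is a completion of squares. Since $\mathcal{R}\succ 0$, we have $[\bar{\varrho}_{k+1}(x)-\bar{\varrho}_k(x)]^\top\mathcal{R}[\bar{\varrho}_{k+1}(x)-\bar{\varrho}_k(x)]\ge 0$ for every $x$. Expanding this gives
\begin{align}
\bar{\varrho}_{k+1}^\top\mathcal{R}\bar{\varrho}_{k+1}\ge \bar{\varrho}_{k+1}^\top\mathcal{R}\bar{\varrho}_{k}+\bar{\varrho}_{k}^\top\mathcal{R}\bar{\varrho}_{k+1}-\bar{\varrho}_{k}^\top\mathcal{R}\bar{\varrho}_{k},\nonumber
\end{align}
which is the analogue of (\ref{eq:ox_28}). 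Substituting this lower bound into the left-hand side of the target inequality, it suffices to show
\begin{align}
\bar{\varrho}_{k+1}^\top\mathcal{R}\bar{\varrho}_{k}+\bar{\varrho}_{k}^\top\mathcal{R}\bar{\varrho}_{k+1}-2\bar{\varrho}_{k}^\top\mathcal{R}\bar{\varrho}_{k}+\mathcal{Q}_k(x)-\mathcal{Q}_{k+1}(x)\ge 0 \nonumber
\end{align}
for all $x\in\mathcal{X}$.

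To close the argument, note that condition (\ref{eq:ox_94}) states that this scalar polynomial belongs to $\Sigma[x]$. An SOS polynomial is nonnegative on all of $\mathbb{R}^n$, hence in particular on $\mathcal{X}$. This gives the sufficient inequality above, and chaining it with the completion-of-squares bound yields (\ref{eq:ox_95}) for every $k$.

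No step is a real obstacle. The only points needing care are the interpretation of the typo $\bar{\varrho}(x)$ in (\ref{eq:ox_93}) as $\bar{\varrho}_k(x)$, and the remark that SOS membership implies pointwise nonnegativity. Because the SOS certificate is global, the conclusion in fact holds on all of $\mathbb{R}^n$, not only on $\mathcal{X}$.
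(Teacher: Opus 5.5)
Your proof is correct and follows the same approach as the paper. The paper omits this proof as ``similar to that of Lem.~\ref{lemma:lem_1}'', and your adaptation is exactly that argument: read $\bar{\varrho}(x)$ as $\bar{\varrho}_k(x)$, drop the weight-monotonicity step because $\mathcal{R}$ is fixed, complete the square using $\mathcal{R}\succ 0$, and note that SOS membership implies pointwise nonnegativity.
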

	\begin{proof} The proof is similar to that of Lem. \ref{lemma:lem_1} and is omitted.
	\end{proof}
	         
	In addition, we employ the condition below, subject to (\ref{eq:ox_86}), as \emph{a stopping criterion}
	 
	{\small
		\begin{align}\label{eq:ox_114}
			\begin{bmatrix} \begin{bmatrix} Z(x)\\ W(x) u(x)\end{bmatrix}^\top Q_3
				\begin{bmatrix} Z(x)\\ W(x) u(x)\end{bmatrix}-T(x) & \star \\
				\Big( -\mathcal{R}[f(x)+g(x)u(x)]-\frac{1}{2}\nabla V(x) \Big) & \mathcal{R} \end{bmatrix} \nonumber\\
			- \alpha(x) (1-L(x)) \in \Sigma^{n+1} [x],
	\end{align}}

	$Q_3 \in \mathbb{S}^{N+q}$. According to Thm. \ref{thm:thm_5}, if (\ref{eq:ox_114}) is satisfied subject to (\ref{eq:acc_23})-(\ref{eq:acc_22}), then our problem has been solved $\forall x(0) \in \mathcal{E}(L,1)$. 
	Once the controlled system is held stable in some $\mathcal{E}(L,1)$, a contractive invariant set can be obtained by determining the largest set $\mathcal{E}(V,\rho)$ that fulfills $\mathcal{E}(V,\rho) \subseteq \mathcal{E}(L,1)$, with $\rho \in \mathbb{R}_{>0}$. This can be accomplished by solving the SOS condition below for some $\rho$ and some $\psi (x) \in \Sigma [x]$, for given $L(x)$ and $V(x)$ that satisfy (\ref{eq:acc_23})-(\ref{eq:acc_21}), 
	\begin{align}\label{eq:acc_20}
		(1-L) - \psi (\rho - V) \in \Sigma [x].
	\end{align} 
	$\overline{\underline{\textbf{Algorithm 1:~~~~~~~~~~~~~~~~~~~~~~~~~~~~~~~~~~~~~~~~~~~~~~~~~~~~}}}$
	\textbf{Inputs:} \textbf{A}, \textbf{Q}, $\xi$, $Z(x)$, $W(x)$. Consider that $f(x)$ and $g(x)$ are given by (\ref{eq:ox_88}). Set $k_{max} \in\mathbb{N}_{>0}$, $u=\emptyset$, $\mathcal{R} = \textbf{Q}^{-1}$, $\left\{ \beta_v ,\beta_t ,\beta_l \right\} \in \mathbb{R}_{>0}$, $\left\{ n_v ,n_t ,n_l  \right\}\in \mathbb{N}_{>0}$. Set the degrees on $x$ of all decision variables, $\forall k\in \{0,1,2,\cdots ,k_{max} \}$.\\$ $\\
	STEP 1. Set $k=0$ and initialize $\alpha_0\in\Sigma^{n+1} [x]$, $s_{vl,0}\in \Sigma [x]$ and $s_{tl,0} \in \Sigma [x]$. Find $(V_{0}, T_{0}, L)$, subject to (\ref{eq:acc_23})-(\ref{eq:acc_22}), and $(\mathcal{Q}_0 ,u_0)$, such that (\ref{eq:ox_133}) holds.
	$ $\\
	\textbf{If} STEP 1 is feasible, then find $Q_3 \in \mathbb{S}^{N+m}$, $\{V, T, s_{vl}, s_{tl}\}$ subject to (\ref{eq:acc_21})-(\ref{eq:acc_22}), and $\alpha$ such that (\ref{eq:ox_114}) holds with $u=u_0$ and subject to (\ref{eq:ox_86}).\\
	$\cdot$ \textbf{If} the conditions above are feasible, then $V=V_0$, $u = u_0$. Go to STEP 3.\\
	$\cdot$ \textbf{Else} go to STEP 2.\\
	$\cdot$ \textbf{End-if}.\\
	\textbf{Else} \textbf{STOP}.\\
	\textbf{End-if}.\\$ $\\ 
	STEP 2. \textbf{While} $0\le k < k_{max}$:\\ 
	Find $(V_{k+1} ,T_{k+1}, s_{vl,k+1}, s_{tl,k+1})$ subject to (\ref{eq:acc_21})-(\ref{eq:acc_22}), some $\alpha_{k+1}$, and $(\mathcal{Q}_{k+1} ,u_{k+1})$ such that (\ref{eq:ox_133}) holds subject to (\ref{eq:ox_94}).\\
	\textbf{If} feasible, find $Q_3 \in \mathbb{S}^{N+m}$, $\{V, T, s_{vl}, s_{tl}\}$ subject to (\ref{eq:acc_21})-(\ref{eq:acc_22}), and $\alpha$ such that (\ref{eq:ox_114}) holds with $u=u_{k+1}$ and subject to (\ref{eq:ox_86}).\\
	$\cdot$ \textbf{If} this is feasible, then $V=V_{k+1}$, $u = u_{k+1}$. Go to STEP 3.\\
	$\cdot$ \textbf{End-if}.\\
	\textbf{Else} \textbf{STOP}.\\
	\textbf{End-if}.\\
	$k=k+1$.\\
	\textbf{End-while}. STOP. \\$ $\\
	STEP 3. Set some $\rho \in\mathbb{R}_{>0}$, determine $\psi \in \Sigma [x]$ that solves (\ref{eq:acc_20}). Consider higher values of $\rho$ and solve (\ref{eq:acc_20}) again, until it is no longer feasible, then STOP.\\$ $\\
	\textbf{Ouputs:} $u(x)$, $\mathcal{E}(L,1)$, $\mathcal{E}(V,\rho)$.\\
	$\overline{~~~~~~~~~~~~~~~~~~~~~~~~~~~~~~~~~~~~~~~~~~~~~~~~~~~~~~~~~~~~~~~~~~~~~~~~}$
	$ $\\
	In STEP 1, $\alpha_0$ is set by the designer. Then, the algorithm determines $V_0$, $u_0$, $L$ and all other decision variables. If stopping criterion (\ref{eq:ox_114}) is not verified, then we move to STEP 2, the iterative part of the method, where condition (\ref{eq:ox_94}) is imposed to guarantee that $\Delta_{cl,k+1} \succeq \Delta_{cl,k}$. Notice, however, that $L$ is not updated.

	\subsection*{Strategy 2: Direct Data-Driven Control}
		
	In this alternative approach, as said before, a direct data-based approach is employed for controller design, using S-procedure. 
	
	\begin{proposition}\label{prop:prp_1} Consider polynomial system (\ref{eq:sos_2}) and Assumption \ref{ass:a_2}. Suppose that there exist positive definite $V(x)\in \Sigma [x]$, $T(x) \in \Sigma [x]$ and $L(x)\in \Sigma [x]$; $\sigma (x) \in \Sigma [x]$, $\beta (x) \in \Sigma [x]$, $\alpha (x) \in \Sigma^{1+n+q+N} [x]$, $\mathcal{Q}(x)\in\mathcal{P} [x]$, $Q_e(x)\in\mathcal{P}^{n\times n} [x]$, $\gamma \in \mathbb{R}_{>0}$ and some $u_{\gamma}(x)\in\mathcal{P}^{m\times 1} [x]$, such that 
		
	{\small
			\begin{align}\label{eq:ox_8}
				\begin{bmatrix}
					\mathcal{Q}(x)-T(x)  & \star & \star\\
					-\frac{1}{2}\nabla V(x) & \gamma I_n + \sigma (x) \textbf{C} & \star\\
					-\begin{bmatrix} \gamma Z(x)\\ W(x)u_{\gamma}(x) \end{bmatrix} & 
					\sigma (x) \textbf{B} & \sigma (x) \textbf{A}
				\end{bmatrix} \nonumber\\
				- \alpha (x) (1 - L(x)) \in \Sigma^{1+n+q+N} [x],
	\end{align}}
	
	and
	{\small
			\begin{align}\label{eq:ox_10}
				\begin{bmatrix} 0\\ 
					\begin{bmatrix} \gamma Z(x)\\ W(x)u_{\gamma}(x) \end{bmatrix} 
				\end{bmatrix} \gamma^{-1} \begin{bmatrix}\star \end{bmatrix} -
				\begin{bmatrix} Q_e (x) - \beta (x) \textbf{C} & \star\\ 
					-\beta (x)\textbf{B} & -\beta (x) \textbf{A}
				\end{bmatrix} \nonumber\\
				= \bar{\Delta}_{cl} (x) \in \Sigma^{n+q+N} [x],
	\end{align}}

	with 
	\begin{align}\label{eq:ox_12}
			Q_e (x) = Q_e (x)^\top,~~tr\Big( Q_e (x) \Big) \succeq \mathcal{Q}(x). 
	\end{align}
	Then, the polynomial state feedback control law given by
	\begin{align}\label{eq:ox_11}
			u(x) = u_{\gamma}(x) / \gamma 
	\end{align}
	asymptotically stabilizes system (\ref{eq:sos_2}), $\forall [A~B]\in \mathcal{C}$, $\forall x(0) \in \mathcal{E}(L,1)$. If, in addition, $\mathcal{E}(V,\rho) \subseteq \mathcal{E}(L,1)$ for some $\rho \in\mathbb{R}_{>0}$, then $\mathcal{E}(V,\rho)$ is contractive invariant and it is an estimate of the closed-loop DOA. If $\mathcal{X}=\mathbb{R}^n$ and $V(x)$ is radially unbounded, then stability is global.
	\end{proposition}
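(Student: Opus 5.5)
The plan is to fix an arbitrary $[A~B]\in\mathcal{C}$, write $Z_{AB}=[A~B]^\top$, $f(x)=AZ(x)$, $g(x)=BW(x)$, and then verify the two hypotheses of Thm.~\ref{thm:thm_3} with $\mathcal{X}=\mathcal{E}(L,1)$, $\mathcal{R}=\gamma I_n$ and $u(x)=u_\gamma(x)/\gamma$. These hypotheses are the dissipation inequality (\ref{eq:ox_4}), in its equivalent form (\ref{eq:ox_83}), and $\Delta_{cl}(x)\succeq 0$. The data enter only through the fact, from (\ref{eq:ox_17}), that
\begin{equation*}
\Theta:=[I~~Z_{AB}^\top]\left[\begin{array}{cc}\textbf{C} & \textbf{B}^\top\\ \textbf{B} & \textbf{A}\end{array}\right]\begin{bmatrix} I\\ Z_{AB}\end{bmatrix}=\textbf{C}+Z_{AB}^\top\textbf{B}+\textbf{B}^\top Z_{AB}+Z_{AB}^\top\textbf{A}Z_{AB}\preceq 0 .
\end{equation*}
The role of the multipliers $\sigma(x)$ and $\beta(x)$ is an S-procedure: since they are nonnegative, they can only add nonpositive terms once the data inequality is inserted.

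For the dissipation inequality, I would start from (\ref{eq:ox_8}). For $x\in\mathcal{E}(L,1)$ we have $\alpha(x)(1-L(x))\succeq 0$, so the $(1+n+q+N)$-dimensional matrix in (\ref{eq:ox_8}) is positive semidefinite there. I would evaluate its quadratic form at the vector $[1;~u_e;~Z_{AB}u_e]$ for arbitrary $u_e\in\mathbb{R}^n$. The key choice is the last block $w=Z_{AB}u_e$. With it, the cross term becomes $-2w^\top[\gamma Z;\,Wu_\gamma]=-2u_e^\top(\gamma AZ+BWu_\gamma)=-2u_e^\top(\gamma f+gu_\gamma)$. The $\sigma$-terms collapse to $\sigma(x)\,u_e^\top\Theta u_e$.

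This gives
\begin{equation*}
0\le \mathcal{Q}-T-\nabla V^\top u_e-2[\gamma f+gu_\gamma]^\top u_e+\gamma u_e^\top u_e+\sigma\, u_e^\top\Theta u_e .
\end{equation*}
Because $\sigma\ge 0$ and $\Theta\preceq 0$, the last term can be dropped. What remains is exactly (\ref{eq:ox_83}), i.e.\ (\ref{eq:ox_4}) with $\mathcal{R}=\gamma I_n$ and $u=u_\gamma/\gamma$, for all $(x,u_e)\in\mathcal{E}(L,1)\times\mathbb{R}^n$.

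For $\Delta_{cl}\succeq 0$, I would use (\ref{eq:ox_10}). Set $v(x)=[\gamma Z(x);\,W(x)u_\gamma(x)]$, so that $\varrho(x)=Z_{AB}^\top v(x)=\gamma f+gu_\gamma$, as in (\ref{eq:ox_125}). I would congruence-transform $\bar\Delta_{cl}(x)\succeq 0$ by $[I~~Z_{AB}^\top]$ on the left and its transpose on the right. This yields the $n\times n$ inequality
\begin{equation*}
\gamma^{-1}\varrho\varrho^\top-Q_e+\beta\,\Theta\succeq 0 ,
\end{equation*}
where the $0$ block in the first factor of (\ref{eq:ox_10}) makes the rank-one part reduce to $\gamma^{-1}\varrho\varrho^\top$. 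Since $\beta\ge 0$ and $\Theta\preceq 0$, we get $\gamma^{-1}\varrho\varrho^\top\succeq Q_e$. Taking traces and using (\ref{eq:ox_12}) gives $\gamma^{-1}\|\varrho\|^2\ge \mathrm{tr}(Q_e)\ge\mathcal{Q}$, i.e.\ $\Delta_{cl}(x)\ge 0$ by (\ref{eq:ox_38}); this in fact holds for every $x$.

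Finally, $V$ and $T$ are positive definite by hypothesis, so Thm.~\ref{thm:thm_3} applies on $\mathcal{E}(L,1)$ and yields asymptotic stability, the contractive invariance and DOA estimate of $\mathcal{E}(V,\rho)\subseteq\mathcal{E}(L,1)$, and the global statement. The only real obstacle is spotting the lifting vector $Z_{AB}u_e$ in the first step and the trace argument linking the matrix $Q_e$ to the scalar $\mathcal{Q}$ in the second. Everything else is bookkeeping of block products, which I would check carefully, including the factor $1/2$ on $\nabla V$ and the sign conventions of $\textbf{B}$.
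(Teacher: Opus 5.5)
Your proposal is correct and matches the paper's proof essentially step by step. The paper also evaluates the quadratic form of (\ref{eq:ox_8}) at $E=[1~~u_e^\top~~u_e^\top[A~B]]^\top$ and drops the S-procedure term $\sigma(x)u_e^\top\Theta u_e\le 0$ to get (\ref{eq:ox_21}); it then applies the congruence by $[I~A~B]$ to (\ref{eq:ox_10}), discards $\beta(x)\Theta\preceq 0$, and takes the trace with (\ref{eq:ox_12}) to obtain $\Delta_{cl}(x)\ge 0$ before invoking Thm.~\ref{thm:thm_3} with $\mathcal{R}=\gamma I_n$ (your remarks that $\alpha(x)(1-L(x))\succeq 0$ on $\mathcal{E}(L,1)$ and that $\Delta_{cl}\ge 0$ holds for every $x$ are small clarifications the paper leaves implicit).
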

	\begin{proof} Consider the following vector $E \in \mathbb{R}^{1+n+q+N}$
		
		{\small
			\begin{align}\label{eq:ox_15}
				E = \begin{bmatrix} 1 & u^\top_e & u^\top_e [A~B] \end{bmatrix}^\top.
			\end{align}}
		
		By multiplying matrix condition (\ref{eq:ox_8}) from the left by $E^\top$ and from the right by $E$, we obtain the following polynomial on $(x,u_e)$
		
		{\small
			\begin{align}
					\Bigg( \mathcal{Q}(x) - T(x) - 2\Bigg[ [A~B]\begin{bmatrix} \gamma Z(x)\\ W(x)u_\gamma (x) \end{bmatrix} + \frac{1}{2} \nabla V(x)\Bigg]^\top  u_e \nonumber\\
					+ u_e^\top \mathcal{R} u_e \Bigg) - \alpha (x)(1-L(x)) + \sigma(x) u^\top_e \Theta u_e \succeq 0, 	\nonumber
			\end{align}}
		
		due to (\ref{eq:ox_37}), where, from (\ref{eq:ox_17}), let us define
		\begin{align}\label{eq:ox_20}
				\Theta =  [I~A~B]\cdot \left[ \begin{array}{c|c}
					\textbf{C} & \textbf{B}^\top \\ \hline
					\textbf{B} & \textbf{A}
				\end{array} \right] \cdot [\star]^\top \preceq 0.	
		\end{align}
		This means that $\forall [A~B]\in \mathcal{C}$ condition (\ref{eq:ox_21}) holds $\forall x \in \mathcal{E}(L,1)$
		
		{\small
			\begin{align}\label{eq:ox_21}
					\nabla V(x)^\top u_e + T(x) \le \mathcal{Q}(x)~~~~~~~~~\nonumber\\
					- 2\Bigg[ [A~B]\begin{bmatrix} Z(x)\\ W(x)u(x) \end{bmatrix}\Bigg]^\top \mathcal{R} u_e + u_e^\top \mathcal{R} u_e .
			\end{align}}
			
		Notice that this does not guarantee closed-loop stabilization, since the non-negativity of $\Delta_{cl} (x)$ in (\ref{eq:c5_32}) still needs to be certified. In fact, $\Delta_{cl} (x) \succeq 0$ is verified if conditions (\ref{eq:ox_10})-(\ref{eq:ox_12}) are feasible. In order to come to this conclusion, note that by multiplying (\ref{eq:ox_10}) from the left by $[I~A~B]$ and from the right by $[I~A~B]^\top$ it follows that
		
		{\small
				\begin{align}\label{eq:ox_24}
					\begin{bmatrix} \gamma AZ(x) + BW(x)u_\gamma (x) \end{bmatrix} \gamma^{-1} \begin{bmatrix}\star  \end{bmatrix} - Q_e (x) + \beta (x) \Theta \succeq 0.
		\end{align}}
	
		Using (\ref{eq:ox_11}), as well as $\beta(x)\succeq 0$ and $\Theta \preceq 0$, condition (\ref{eq:ox_24}) leads to 
		
		{\small
				\begin{align}\label{eq:ox_22}
						\begin{bmatrix} AZ(x) + BW(x)u(x) \end{bmatrix} \gamma \begin{bmatrix}\star  \end{bmatrix} - Q_e (x) \succeq 0,
				\end{align}
		}
		
		for all $[A~B]\in \mathcal{C}$. Now, consider (\ref{eq:ox_12}) and the following trace properties, which hold for all square matrices $X$ and $Y$ and all $c\in \mathbb{R}$ 
			\begin{align}
					tr(X+Y)=tr(X)+tr(Y),~~~~~~~ \nonumber\\
					tr(XY)=tr(YX),~~tr(cX)=c\cdot tr(X).\nonumber
			\end{align}
		Due to the above relations, and also by considering $\mathcal{R}=\gamma I_n$ and (\ref{eq:ox_22}), we conclude that condition (\ref{eq:c5_32}) repeated below is valid
		
		{\small
			\begin{align}
					\Delta_{cl} (x) = \begin{bmatrix} AZ(x) + BW(x)u(x) \end{bmatrix}^\top \mathcal{R} \begin{bmatrix}\star  \end{bmatrix} - \mathcal{Q}(x) \succeq 0, \nonumber
			\end{align}
		}
		
		and, from Thm. \ref{thm:thm_3}, closed-loop stabilization by the control law in (\ref{eq:ox_11}) is guaranteed $\forall [A~B]\in \mathcal{C}$.
		\end{proof}
	
		Conditions (\ref{eq:ox_8}) and (\ref{eq:ox_12}) are \emph{linear} on all decision variables, whereas condition (\ref{eq:ox_10}) is \emph{nonlinear} and must be solved through iterative procedures. An adaptation of the recurrent inequalities proposed in Lem. \ref{lemma:lem_1} must be developed.$ $\\
		
		\begin{lemma}\label{lemma:lem_5} Let us define $\forall k\in \{0,1,2,\cdots \}$ the following functions based on conditions (\ref{eq:ox_10})-(\ref{eq:ox_12})
			\begin{align}\label{eq:ox_31}
				\bar{S}_k (x) = \begin{bmatrix} 0\\ \begin{bmatrix} \gamma_k Z(x)\\ W(x)u_{\gamma ,k} (x) \end{bmatrix}  \end{bmatrix} 
			\end{align}
			and
			\begin{align}\label{eq:ox_32}
				\bar{Q}_k (x)= \begin{bmatrix} Q_{e,k} (x) - \beta_k (x) \textbf{C} & \star\\ 
					-\beta_k (x)\textbf{B} & -\beta_k (x) \textbf{A}
				\end{bmatrix}.
			\end{align} 
			In accordance with (\ref{eq:ox_10}), let $\bar{\Delta}_{cl,k} (x)$ be defined as:
			\begin{align}\label{eq:ox_34}
				\bar{\Delta}_{cl,k} (x) = \bar{S}_{k} (x) \gamma^{-1}_{k} \bar{S}^\top_k (x) - \bar{Q}_{k} (x).
			\end{align}
			$\forall x \in \mathcal{X}$, $\forall k\in \{0,1,2,\cdots \}$. If
			\begin{align}\label{eq:sos_45}  
				\gamma_k - \gamma_{k+1} \succeq 0
			\end{align}
			and 
			\begin{align}\label{eq:ox_34}
				\bar{S}_{k+1} \gamma^{-1}_{k} \bar{S}^\top_k + \bar{S}_k \gamma^{-1}_{k} \bar{S}^\top_{k+1} - 2 \bar{S}_k \gamma^{-1}_{k} \bar{S}^\top_k \nonumber\\
				+ \bar{Q}_k - \bar{Q}_{k+1} \in \Sigma^{n+q+N} [x], 
			\end{align}
			then it follows that for all $k\in \{0,1,2,\cdots \}$
			\begin{align}\label{eq:sos_47}
				\bar{\Delta}_{cl,k+1} (x) \succeq \bar{\Delta}_{cl,k} (x).
			\end{align}			
		\end{lemma}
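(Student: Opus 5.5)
The argument mirrors that of Lem. \ref{lemma:lem_1}, now in matrix form, with the scalar quadratic form $\varrho^\top \gamma^{-1}\varrho$ replaced by the outer product $\bar{S}\gamma^{-1}\bar{S}^\top$. From (\ref{eq:ox_34}), the target inequality (\ref{eq:sos_47}) is equivalent to
\begin{align}
\bar{S}_{k+1}\gamma^{-1}_{k+1}\bar{S}^\top_{k+1} - \bar{Q}_{k+1} \succeq \bar{S}_{k}\gamma^{-1}_{k}\bar{S}^\top_{k} - \bar{Q}_{k}, \nonumber
\end{align}
to hold for every $x\in\mathcal{X}$. The plan is to bound the left-hand side from below in two stages: first remove the dependence on $\gamma_{k+1}$, then linearize the quadratic term in $\bar{S}_{k+1}$ around $\bar{S}_k$.

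For the first stage, (\ref{eq:sos_45}) together with $\gamma_k,\gamma_{k+1}>0$ gives $\gamma^{-1}_{k+1}\ge \gamma^{-1}_k$. Since $\bar{S}_{k+1}\bar{S}^\top_{k+1}\succeq 0$, it follows that $\bar{S}_{k+1}\gamma^{-1}_{k+1}\bar{S}^\top_{k+1}\succeq \bar{S}_{k+1}\gamma^{-1}_{k}\bar{S}^\top_{k+1}$. It therefore suffices to prove the inequality above with $\gamma_{k+1}^{-1}$ replaced by $\gamma_k^{-1}$ on the left.

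For the second stage, expand the positive semidefinite matrix $(\bar{S}_{k+1}-\bar{S}_k)\gamma_k^{-1}(\bar{S}_{k+1}-\bar{S}_k)^\top \succeq 0$. This yields
\begin{align}
\bar{S}_{k+1}\gamma^{-1}_{k}\bar{S}^\top_{k+1} \succeq \bar{S}_{k+1}\gamma^{-1}_{k}\bar{S}^\top_{k} + \bar{S}_{k}\gamma^{-1}_{k}\bar{S}^\top_{k+1} - \bar{S}_{k}\gamma^{-1}_{k}\bar{S}^\top_{k}. \nonumber
\end{align}
Substituting this lower bound, the reduced inequality holds whenever
\begin{align}
\bar{S}_{k+1}\gamma^{-1}_{k}\bar{S}^\top_{k} + \bar{S}_{k}\gamma^{-1}_{k}\bar{S}^\top_{k+1} - 2\bar{S}_{k}\gamma^{-1}_{k}\bar{S}^\top_{k} + \bar{Q}_k - \bar{Q}_{k+1} \succeq 0, \nonumber
\end{align}
which is exactly the matrix in the SOS hypothesis of the lemma.

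The remaining step is to pass from the SOS certificate to pointwise semidefiniteness. Membership in $\Sigma^{n+q+N}[x]$ means the matrix can be written as $\sum_j W_j(x)W_j(x)^\top$, as in the discussion around (\ref{eq:ox_136}), so it is positive semidefinite for every $x$, in particular for every $x\in\mathcal{X}$. Chaining this with the two lower bounds gives (\ref{eq:sos_47}) for all $k$.

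I do not expect a real obstacle. The only point needing care is the first stage: one must check that the monotonicity in $\gamma^{-1}$ is applied to a positive semidefinite outer product, and that all expressions involved are symmetric. Symmetry holds because $\bar{Q}_k$ is symmetric by construction, owing to the $\star$ blocks and $Q_{e,k}=Q_{e,k}^\top$ from (\ref{eq:ox_12}).
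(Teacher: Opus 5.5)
Your proposal is correct and follows the same route as the paper. The paper omits this proof and defers to Lem.~\ref{lemma:lem_1}, and you carry that argument over to the outer-product setting: use (\ref{eq:sos_45}) to replace $\gamma_{k+1}^{-1}$ by $\gamma_k^{-1}$, expand $(\bar{S}_{k+1}-\bar{S}_k)\gamma_k^{-1}(\bar{S}_{k+1}-\bar{S}_k)^\top\succeq 0$, and invoke the SOS hypothesis pointwise, which also makes explicit the symmetry and SOS-to-PSD details the paper leaves implicit.
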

		\begin{proof} A proof is similar to the one provided for Lem. \ref{lemma:lem_1} and will be omitted. 
		\end{proof}
		   
		If, for any $k\in \{0,1,2,\cdots \}$ one obtains $\bar{\Delta}_{cl,k} (x)\succeq 0$, then, from Thm. \ref{prop:prp_1}, we guarantee that $\Delta_{cl} (x) \succeq 0$ and feedback stabilization has been achieved $\forall x(0) \in \mathcal{X}$. Notice that we have $\bar{S}_k (\gamma_k ,u_{\gamma ,k} (x))$ and $\bar{Q}_k (Q_{e,k} (x), \beta_k (x))$, where the explicit dependence on all decision variables is made. 
		
		We have empirically verified that employing condition (\ref{eq:ox_10}) as a stopping criterion is not efficient, since a stabilizing controller may be obtained many iterations before $\bar{\Delta}_{cl} (x)\succeq 0$ is verified. Employing condition   (\ref{eq:ox_114}) as a stopping criterion instead, subject to (\ref{eq:ox_85})-(\ref{eq:ox_86}), proved more suitable in simulations. These constraints are less computationally demanding and less conservative than (\ref{eq:ox_10}).
		\\$ $\\
		$\overline{\underline{\textbf{Algorithm 2:~~~~~~~~~~~~~~~~~~~~~~~~~~~~~~~~~~~~~~~~~~~~~~~~~~~~}}}$
		\textbf{Inputs:} \textbf{A}, \textbf{B}, \textbf{C}, \textbf{Q}, $Z(x)$, $W(x)$. Set $k_{max} \in\mathbb{N}_{>0}$, $u=\emptyset$, $\left\{ \beta_v ,\beta_t ,\beta_l \right\} \in \mathbb{R}_{>0}$, $\left\{ n_v ,n_t ,n_l  \right\}\in \mathbb{N}_{>0}$. Set the degrees on $x$ of all decision variables, $\forall k\in \{0,1,2,\cdots ,k_{max} \}$.
		\\$ $\\
		STEP 1. Set $k=0$ and initialize $\alpha_0\in\Sigma^{1+n+q+N} [x]$, $s_{vl,0}\in \Sigma [x]$ and $s_{tl,0} \in \Sigma [x]$. Find $(V_{0}, T_{0}, L)$, subject to (\ref{eq:acc_23})-(\ref{eq:acc_22}), and $(\mathcal{Q}_0 ,\gamma_0, u_{\gamma,0}, \sigma_0)$, such that (\ref{eq:ox_8}) holds.
		$ $\\
		\textbf{If} STEP 1 is feasible, then find $Q_3 \in \mathbb{S}^{N+m}$, $\{V, T, s_{vl}, s_{tl}\}$ subject to (\ref{eq:acc_21})-(\ref{eq:acc_22}), and $\alpha$ such that (\ref{eq:ox_114}) holds with $u=u_0$ and subject to (\ref{eq:ox_86}) and $\mathcal{R}=\textbf{Q}^{-1}$.\\
		$\cdot$ \textbf{If} the above is feasible, then $V=V_0$, $u = u_0$. Go to STEP 3.\\
		$\cdot$ \textbf{Else} go to STEP 2.\\
		$\cdot$ \textbf{End-if}.\\
		\textbf{Else} \textbf{STOP}.\\
		\textbf{End-if}.\\$ $\\ 
		STEP 2. \textbf{While} $0\le k < k_{max}$:\\ 
		Find $(V_{k+1} ,T_{k+1}, s_{vl,k+1}, s_{tl,k+1})$ subject to (\ref{eq:acc_21})-(\ref{eq:acc_22}), some $\alpha_{k+1}$, and $(\mathcal{Q}_{k+1} , \gamma_{k+1}, u_{\gamma, k+1}, \sigma_{k+1} )$ such that (\ref{eq:ox_8}) holds subject to (\ref{eq:sos_45})-(\ref{eq:ox_34}).\\
		\textbf{If} feasible, then find $Q_3 \in \mathbb{S}^{N+m}$, $\{V, T, s_{vl}, s_{tl}\}$ subject to (\ref{eq:acc_21})-(\ref{eq:acc_22}), and $\alpha$ such that (\ref{eq:ox_114}) holds with $u=u_{k+1}$ and subject to (\ref{eq:ox_86}) and $\mathcal{R}=\textbf{Q}^{-1}$.\\
		$\cdot$ \textbf{If} this is feasible, then $V=V_{k+1}$, $u = u_{k+1}$. Go to STEP 3.\\
		$\cdot$ \textbf{End-if}.\\
		\textbf{Else} \textbf{STOP}.\\
		\textbf{End-if}.\\
		$k=k+1$.\\
		\textbf{End-while}. STOP. \\$ $\\
		STEP 3. The same as in Algorithm 1.\\$ $\\
		\textbf{Ouputs:} $u(x)$, $\mathcal{E}(L,1)$, $\mathcal{E}(V,\rho)$.\\
		$\overline{~~~~~~~~~~~~~~~~~~~~~~~~~~~~~~~~~~~~~~~~~~~~~~~~~~~~~~~~~~~~~~~~~~~~~~~~}$
		
		Experiments confirm that the output of STEP 1 is usually quite close to a true solution of our problem, and it provides a great initialization for STEP 2 of the algorithm. Unlike other SOS-strategies available for data-driven control, Algorithm 2 is quite robust with regards to the data themselves, for a given level of noise.

		\section{Numerical Results}
		
		In this section, simulations are performed using SOSTOOLS \cite{prajna2} and MOSEK ApS in MATLAB, where codes are available at \cite{DMgithub}. 
		Consider the model below from \cite{moez1}
		
		{\scriptsize
		\begin{align}
			f(x) = \begin{bmatrix}
				-x_1 +x_2 +x^2_1+x_1 x_2 -x^3_1 +x^2_1 x_2 -x_1 x^2_2 +2x^3_2 \\
				-x_1 +1.5x_2 -x^2_1 -0.5x_1 x_2 -x^3_1 -x^2_1 x_2 +0.5x_1 x^2_2 -2x^3_2
			\end{bmatrix},\nonumber
		\end{align}
		}
		\begin{equation}
			g(x) = \begin{bmatrix} 0 & 1\end{bmatrix}^\top ,\nonumber
		\end{equation}
		where $\delta_{fg}=3$ and $W(x)=1$. Then, let us consider
		 
		{\small
			\begin{equation}
				Z(x) = \begin{bmatrix} x_1 & x_2 & x^2_1 & x_1 x_2 & x^2_2 & x^3_1 & x^2_1 x_2 & x_1 x^2_2 & x^3_2 \end{bmatrix}^\top .\nonumber
			\end{equation}
		}
		
		Firstly, we run an experiment with $[t_{I} ,t_{F}]=[0,500]$, $x(t_I ) = \begin{bmatrix} -2 & 2\end{bmatrix}^\top$, $u=1.1sin~(3t)$, and collected the first $T=6000$ samples produced. Notice that those samples taken may not be equally distributed in time. Employing (\ref{eq:c5_1}), the values of $\dot{x}(t)$ were determined for all time instants $\{ t_0 ,t_1 ,\cdots , t_{T-1} \}$. Then, the noise upon each sample of $\dot{x}(t)$ was accounted for according to
		\begin{align}\label{eq:ex1_noise}
			d(t) = \frac{\delta}{\sqrt{n} } \begin{bmatrix} sin(5t) & cos(5t)\end{bmatrix}^\top ,\ \delta = 0.04,
		\end{align}
		which fulfills Assumption \ref{ass:a_2} with $R_D R^\top_D = T \delta^2 I$ , as in \cite{luppi}. Then, $X_1$ is obtained, and matrices $\textbf{A}$, $\textbf{B}$, $\textbf{C}$, $\textbf{Q}$, $\xi$ are determined. 
		
		\subsection*{An Application of Strategy 1 (Algorithm 1)}
		
		In STEP 1, we assumed that $V_0 (x):2-6$, that is, $V_0$ is of degree $2$ to $6$ on $x$. In addition, consider $T_0 (x):2-4$, $\mathcal{Q}_0 (x):2-6$, $u_0 (x):1-3$, $L (x):2$, $\beta_v = \beta_t = \beta_l = 10^{-3}$, $n_v = n_t = n_l = 1$. In STEP 2, $k_{max} = 25$, $\alpha (x):2-8$, and the degrees of $\mathcal{Q}$, $u$, $V$ and $T$ are the same as in STEP 1. In both STEPs, when STOP test (\ref{eq:ox_114}) was employed, the degrees of $V$, $T$ and $\alpha(x)$ also remained unchanged. Then, by applying Algorithm 1, we obtained: 

		{\footnotesize
			\begin{align}
						u (x) = 0.90509 x_1^3 - 0.73433 x_1^2 x_2 + 0.16889 x_1 x_2^2 - 1.316 x_2^3  \nonumber\\
						+ 0.64063 x_1^2 + 0.12002 x_1 x_2 - 0.0031495 x_2^2 + 0.86573 x_1 - 1.9576x_2 . \nonumber
			\end{align}
		}	
		
		In Figure \ref{fig1}, the closed-loop trajectories are portrayed along with the sets \(\mathcal{E}(L,1)\) and \(\mathcal{E}(V,\rho)\) we obtained, with \(\rho = 0.987\).
			\begin{figure}[!ht]
					\centering 
					\includegraphics[scale=0.44]{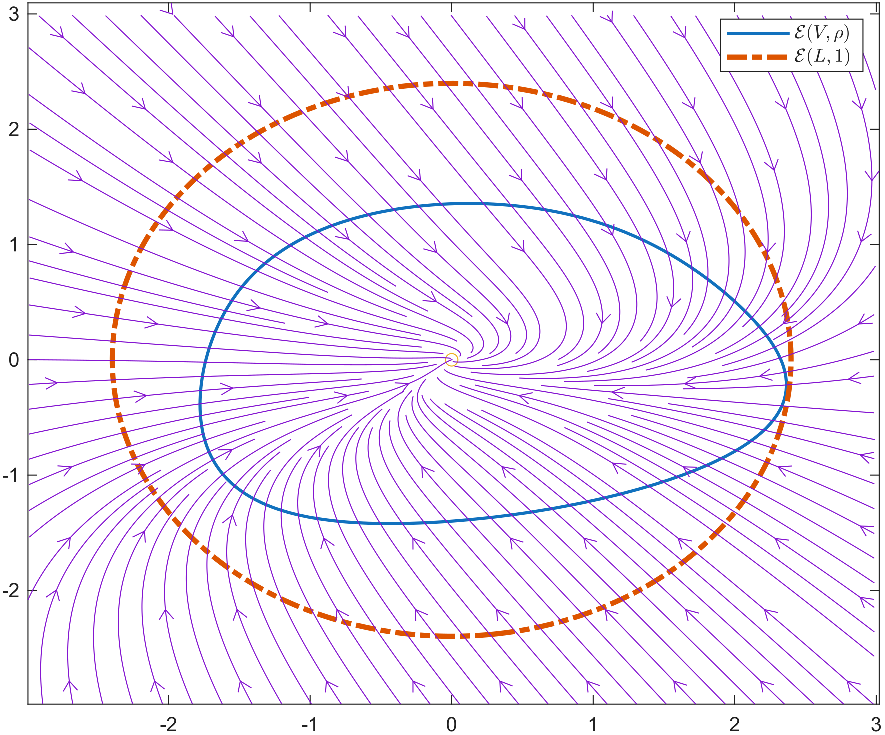}   
					\caption{Closed-loop trajectories produced by Algorithm 1.}
					\label{fig1}
			\end{figure}
				
		The computational complexity was evaluated by the number of decision variables and SOS constraints in the control design program and stop criterion test of each STEP, as well as the total time elapsed. STEP 1 presented 34 decision variables and 7 SOS constraints. STEP 2, 76 decision variables and 7 SOS constraints. The STOP test, 97 decision variables and 7 SOS constraints. The algorithm found a solution within $\kappa=7$ iterations with an execution time of $3.2286$ s.
				
		\subsection*{An Application of Strategy 2 (Algorithm 2)}
				
		In this example, Algorithm 2 was applied to the same system as before and the data was collected performing the same experiment. The input parameters for Algorithm 2 were \(\beta_v = \beta_t = \beta_l = 10^{-6}\), \(n_v = n_t = n_l = 1\), \(\alpha_0 = 10^{-6}||x||^2 I_{1+n+q+N}\), \(s_{vl,0}=s_{tl,0}= 10^{-3}||x||^2\). In STEP 1's control design, we assumed \(V_0(x):2-6\), \(T_0(x):2-4\), \(L(x):2\), \(\mathcal{Q}_0(x):2-6\), \(u_{\gamma,0}(x):1-3\), \(\sigma_0(x):0\), \(\beta_0(x):2\), \(Q_{e,0}(x):2-4\). In STEP 2, \(s_{vl}:2-4\), \(s_{tl}:2-6\), \(\alpha(x):2-8\), \(k_{max} = 15\) and the degrees of \(V\), \(T\), \(\mathcal{Q}\), \(u_\gamma\), \(\sigma\), \(\beta\), \(Q_{e,k}\) remained the same as in STEP 1. In the STOP test, the degrees of \(V\), \(T\), \(\alpha(x)\) remained unchanged. Then, the resulting control law obtained by applying Algorithm 2 was:

		{\footnotesize
				\begin{align}
					u (x) = 0.69063 x_1^3 - 0.59935 x_1^2 x_2 + 1.9954 x_1 x_2^2 - 2.0831 x_2^3 \nonumber \\
					+ 1.452 x_1^2 + 0.41446 x_1 x_2 - 0.60582 x_2^2 + 2.1166 x_1 - 21.7796 x_2 .\nonumber    
				\end{align}
		}
		
		In Figure \ref{fig:ex2_phase}, the close-loop trajectories with control as in (\ref{eq:ox_11}) are portrayed along with the sets \(\mathcal{E}(L,1)\) and \(\mathcal{E}(V,\rho)\), with \(\rho = 0.154\). 
						
			\begin{figure}[!ht]
				\centering
			    \includegraphics[scale=0.44]{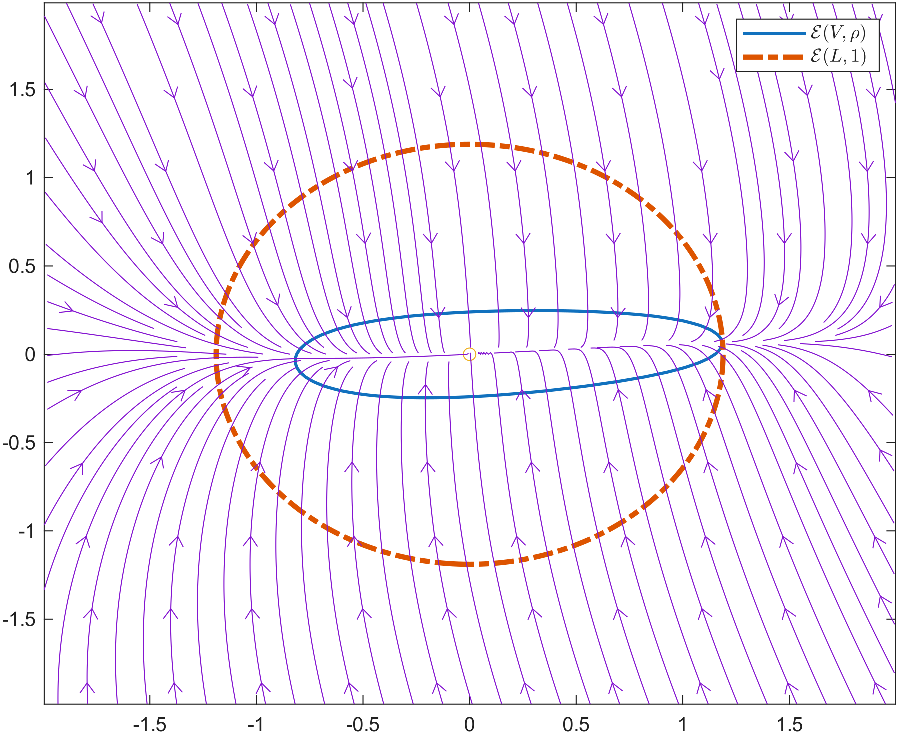}
				\caption{Closed-loop trajectories produced by Algorithm 2.}
				\label{fig:ex2_phase}
			\end{figure}
		Regarding the computational complexity, STEP 1 presented 75 decision variables and 11 SOS constraints. STEP 2, 75 decision variables and 14 SOS constraints. The STOP test, 55 decision variables and 12 SOS constraints. The algorithm found a solution at the first iteration ($\kappa=0$), with an execution time of $2.3103$ s.
						
		In this case, Algorithms 1 and 2 have similar simulation times. However, in general, each repetition of Algorithm 1 is less computationally demanding than those of Algorithm 2. On the other hand, Algorithm 2 typically provides a stabilizing controller at the very first few iterations and is very robust with regards to the data themselves.

		\section{Concluding Remarks}
						
		In this work, new sufficient conditions for state feedback design for nonlinear systems were proposed. This new approach is fully compatible with the well-known framework for data-driven control of polynomials systems available in \cite{biso1} --\cite{persis3}, where a characterization of all systems consistent with the experimental data is given in terms of a matrix ellipsoid. Two iterative SOS strategies for state feedback design were proposed, where stability guarantees for all systems compatible with the data are ensured. We provided computationally efficient approaches that allow to overcome typical drawbacks of other SOS methods in literature, such as the need to employ alternating (D-K) procedures that use model linearization to obtain an initialization for a CLF. Algorithm 1 is of a lower computational complexity than the strategies in \cite{biso1}, \cite{made12}, for example. Algorithm 2, on the other hand, delivers a stabilizing controller in few iterations, at the expense of a higher computational complexity.
						
		Our conditions can also be applied to the case where $d(t)$ is a process disturbance that influences the dynamics, instead of being some noise that appears only upon the measurements of $\dot{x}$. The conditions we have derived remain the same in this case, but the process of data collection would be different. In case $d(t)$ is a process disturbance, it must be applied to the plant in the same sense that the input or excitation signal is. In the case where $d(t)$ is just a noise upon measurements of $\dot{x}$, $d(t)$ is added to $\dot{x}$ only after the excitation $u(t)$ was applied to the plant and the theoretical $x(t)$ (without noise) has been calculated. In this case, the interpretation is that the designed controller $u(x)$ stabilizes all plants $[A~B]\in \mathcal{C}$ when $d=0$.
						
		Future research topics may be feedback control of discrete-time systems, the stabilization of nonzero equilibria, and state feedback design for systems subject to bounds on their state derivatives.


\begin{thebibliography}{00}
							
							\bibitem{hjal}
							H. Hjalmarsson, M. Gevers, S. Gunnarsson, and O. Lequin, ``Iterative feedback tuning: Theory and applications,'' \emph{IEEE Control Syst. Mag.}, vol. 18, no. 4, pp. 26--41, Aug. 1998.
							
							\bibitem{baza1}
							A. Bazanella, L. Campestrini and D. Eckhard, \emph{Data-Driven Controller Design: The $H_2$ Approach.} Haarlem, Netherlands: Springer, 2011.
							
							\bibitem{waarde1}
							H. J. van Waarde, M. K. Camlibel and M. Mesbahi, ``From noisy data to feedback controllers: nonconservative design via a matrix S-lemma,'' \emph{IEEE Trans. Autom. Control}, vol. 67, no. 1, pp. 162--175, Jan. 2022. 
							
							\bibitem{willm3}
							J. C.Willems, P. Rapisarda, I. Markovsky, and B. L. De Moor, ``A note on persistency of excitation, \emph{Syst. Control Lett.}, vol. 54, no. 4, pp. 325--329, Apr. 2005.
							
							\bibitem{morari1}
							M. Tanaskovic, L. Fagiano, C. Novara, and M. Morari, ``Data-driven control of nonlinear systems: An on-line direct approach,'' \emph{Automatica}, vol. 75, pp. 1--10, 2017.
							
							\bibitem{prajna1}
							S. Prajna, A. Papachristodoulou and Fen Wu, ``Nonlinear control synthesis by sum of squares optimization: a Lyapunov-based approach,''  
							in \textit{Proc. 5th Asian Control Conference}, pp. 157-165, 2004.
							
							\bibitem{parri1} 
							P. A. Parrilo, \emph{Structured Semidefinite Programs and Semialgebraic Geometry Methods in Robustness and Optimization.} PhD thesis, California Institute of Technology, Pasadena, CA, 2000.
							
							\bibitem{lasserre}
							D. Henrion and J.-B. Lasserre, ``Convergent relaxations of polynomial matrix inequalities and static output feedback,'' \emph{IEEE Trans. Autom. Control},  vol. 51, no. 2, pp. 192--202, Feb. 2006.
							
							\bibitem{persis1}
							C. De Persis and P. Tesi, ``Formulas for data-driven control: stabilization, optimality, and robustness,'' \emph{IEEE Trans. Autom. Control}, vol. 65, no. 3, pp. 909--924, March 2020. 
							
							\bibitem{persis2}
							M. Guo, C. De Persis and P. Tesi, ``Data-driven stabilization of nonlinear polynomial systems with noisy data,'' \emph{IEEE Trans. Autom. Control}, vol. 67, no. 8, pp. 4210--4217, Aug. 2022.
							
							\bibitem{biso1}
							A. Bisoffi, C. De Persis, and P. Tesi, ``Data-driven control via Petersen's lemma,'' \emph{Automatica}, vol. 145, pp. 110537, 2022.
							
							\bibitem{wilkley2}
							D. de S. Madeira and W. B. Correia, ``Data-Driven Saturated State Feedback Design for Polynomial Systems Using Noisy Data,'' \emph{IEEE Trans. Autom. Control}, vol. 69, no. 11, pp. 7932-7939, Nov. 2024.
							
							\bibitem{persis3}
							A. Luppi, A. Bisoffi, C. De Persis and P. Tesi, ``Data-driven design of safe control for polynomial systems,'' \emph{European Journal of Control}, vol. 75, pp. 100914, 2024.
							
							\bibitem{joao1}
							J. G. N. Silva, D. de S. Madeira and W. B. Correia, ``Data-driven control of polynomial systems using dissipativity-based inequalities and Petersen`s Lemma,'' in \emph{Proc. Int. Conf. Control Mech. Autom. (ICCMA)}, London, UK, 2024, pp. 89--94. 
							
							\bibitem{berbe1}
							J. Berberich, A. Koch, C. W. Scherer, and F. Allg\"ower, ``Robust data-driven state-feedback design,'' in \emph{Proc. Amer. Control Conf.}, Denver, CO, USA, 2020, pp. 1532--1538.
							
							\bibitem{made1}
							D. de S. Madeira, ``Necessary and sufficient dissipativity-based conditions for feedback stabilization,'' 
							\emph{IEEE Trans. Autom. Control}, vol. 67, no. 4, pp. 2100--2107, Apr. 2022.
							
							\bibitem{khali1}
							H. K. Khalil, \emph{Nonlinear Systems.} Upper Saddle River, NJ, USA: Prentice Hall, 2002.
							
							\bibitem{made12} 
							D. de S. Madeira, J. G. N. Silva, G. F. Machado and A. Papachristodoulou, ``Nonlinear static output feedback design for polynomial systems with non-symmetric input saturation bounds,'' \emph{IEEE Control Syst. Lett.}, vol. 9, pp. 937--942, 2025.
							
							\bibitem{bekker}
							P. A. Bekker, ``The Positive Semidefiniteness of Partitioned Matrices'', \textit{Linear Algebra and its Applications}, vol. 111, pp. 261-278, 1988.
							
							\bibitem{prajna2}
							A. Papachristodoulou, J. Anderson, G. Valmorbida, S. Prajna, P. Seiler, P. A. Parrilo, M. M. Peet and D. Jagt, ``{SOSTOOLS}: Sum of squares optimization toolbox for {MATLAB},'' 2021. Available from \texttt{https://github.com/oxfordcontrol/SOSTOOLS}.
								
							\bibitem{DMgithub}
							D. de S. Madeira, J. G. N. Silva, W. B. Correia and A. Papachristodoulou, ``MATLAB Files: (Paper) A New Approach for Feedback Stabilization and its Application for Data-Driven Control of Polynomial Systems'', 2026. Available from \texttt{https://github.com/dmadeiraUFC/Files-TAC-2026}.
							
							\bibitem{moez1}
							M. M. Belhaouane, R. Mtar, H. B. Ayadi, and N. B. Braiek, ``An LMI technique for the global stabilization of nonlinear polynomial systems,'' \emph{Int. J. Comput. Commun. Control.}, vol. 4, no. 4, pp. 335--348, Dec. 2009.
							
							\bibitem{luppi}
							A. Luppi, C. De Persis, and P. Tesi, ``On data-driven stabilization of systems with nonlinearities satisfying quadratic constraints'', \emph{Syst. Control Lett.}, vol. 163, pp. 105206, 2022.
										
	\end{thebibliography}
	\end{document}